\newcommand{\DD}{\widehat{\mathcal{D}}}
\newcommand{\DDD}{\mathcal{D}}
\newcommand{\Dd}{\widecheck{\mathcal{D}}}
\newcommand{\Apqo}{A^{p,q}_\omega}
\newcommand{\lpq}{\ell^{p,q}}
\newcommand{\D}{\mathbb{D}}
\newcommand{\N}{\mathbb{N}}
\newcommand{\C}{\mathbb{C}}
\renewcommand{\phi}{\varphi}
\newcommand{\T}{\mathbb{T}}
\newcommand{\whw}{\widehat{\omega}}
\def\a{\alpha}       \def\b{\beta}        \def\g{\gamma}
     \def\om{\omega}      
                  \def\z{\zeta}
                  \def\vp{\varphi}
\def\omg{\widehat{\omega}}
\renewcommand{\H}{\mathcal{H}}
\newtheorem{theorem}{Theorem}
\newtheorem{lemma}[theorem]{Lemma}
\newtheorem{lettertheorem}{Theorem}
\newtheorem{letterlemma}[lettertheorem]{Lemma}
\theoremstyle{definition}
\theoremstyle{remark}
\numberwithin{equation}{section}
\newenvironment{Prf}{\noindent{\emph{Proof of}}}
{\hfill$\Box$ }
\begin{document}

% \title[short text for running head]{full title}
\title[Atomic decomposition and Carleson measures for weighted mixed norm spaces]{Atomic decomposition and Carleson measures for weighted mixed norm spaces}

\keywords{atomic decomposition, Carleson measure, doubling weight, mixed norm space}
\subjclass[2010]{46E15,47B38}

\author{Jos\'e \'Angel Pel\'aez}
\address{Departamento de An\'alisis Matem\'atico, Universidad de M\'alaga, Campus de
Teatinos, 29071 M\'alaga, Spain} \email{japelaez@uma.es}

\author{Jouni R\"atty\"a}
\address{University of Eastern Finland, P.O.Box 111, 80101 Joensuu, Finland}
\email{jouni.rattya@uef.fi}

\author{Kian Sierra}
\address{Departamento de An\'alisis Matem\'atico, Universidad de M\'alaga, Campus de
Teatinos, 29071 M\'alaga, Spain\\
\newline University of Eastern Finland, P.O.Box 111, 80101 Joensuu, Finland}
\email{kiansierra@hotmail.com}

\thanks{This research was supported in part by Ministerio de Econom\'{\i}a y Competitivivad, Spain, projects
MTM2014-52865-P and MTM2015-69323-REDT; La Junta de Andaluc{\'i}a,
project FQM210; Academy of Finland project no. 268009.}

\begin{abstract}
The purpose of this paper is to establish an atomic decomposition for functions in the weighted mixed norm space $A^{p,q}_\omega$ induced by a radial weight $\omega$ in the unit disc admitting a two-sided doubling condition. The obtained decomposition is further applied to characterize Carleson measures for $A^{p,q}_\omega$, and bounded differentiation operators $D^{(n)}(f)=f^{(n)}$ acting from $A^{p,q}_\omega$ to $L^p_\mu$, induced by a positive Borel measure $\mu$, on the full range of parameters $0<p,q,s<\infty$.
\end{abstract}

\maketitle

\section{Introduction and main results}

Let $\H(\D)$ denote the space of all analytic functions in the open unit disc $\D=\{z\in\C: |z|<1\}$ of the complex
plane $\C$. Further, let $\T$ stand for the boundary of $\D$ and $D(a,r)=\left\{z: |z-a|<r\right\}$ for the
Euclidean disc of center $a\in\C$ and radius $r>0$. For $0<r<1$ and $f\in \H (\D)$, set
    \begin{equation*}
    \begin{split}
    M_p(r,f)&=\left(\frac{1}{2\pi}\int_{0}^{2\pi} |f(re^{it})|^p\,dt\right)^{1/p}, \quad 0<p<\infty,\\
    M_\infty(r,f)&=\sup_{\vert z\vert =r}|f(z)|.
    \end{split}
    \end{equation*}
An integrable function $\om:\D\to[0,\infty)$ is called a weight. It is radial if $\om(z)=\om(|z|)$ for all $z\in\D$.
For a radial weight $\om$, write $\widehat{\om}(z)=\int_{|z|}^1\om(s)\,ds$ for all $z\in\D$.

For $0<p\le\infty$, $0<q<\infty$ and a radial weight $\om$, the weighted mixed norm space~$A^{p,q}_\omega$ consists of $f\in\H(\D)$ such that
    $$
    \|f\|_{A^{p,q}_\omega}^q=\int_{0}^1 M^q_p(r,f)\om(r)\,dr<\infty.
    $$
If $q=p$, then $\Apqo$ coincides with the Bergman space $A^p_\om$ induced by the weight $\omega$.
As usual, $A^p_\alpha$ denotes the weighted Bergman space induced by the standard radial weight $(1-|z|^2)^\alpha$. Weighted mixed norm spaces arise naturally in operator and function theory, for example, in the study of the boundedness, compactness and Schatten classes of the generalized Hilbert operator $H_g(f)(z)=\int_{0}^1f(t)g'(tz)\,dt$ acting on Bergman spaces~\cite{PelRathg,PelSeco}.

A weight $\om$ belongs to the class~$\DD$ if there exists a constant $C=C(\om)\ge1$ such that $\widehat{\om}(r)\le C\widehat{\om}(\frac{1+r}{2})$ for all $0\le r<1$. Moreover, if there exist $K=K(\om)>1$ and $C=C(\om)>1$ such that
    \begin{equation}\label{K}
    \widehat{\om}(r)\ge C\widehat{\om}\left(1-\frac{1-r}{K}\right),\quad 0\le r<1,
    \end{equation}
then we write $\om\in\Dd$. Weights $\om$ belonging to $\DDD=\DD\cap\Dd$ are called doubling. The classes of weights $\DD$ and $\DDD$ emerge from fundamental questions in operator theory: recently the first two authors showed that the weighted Bergman projection $P_\om$, induced by a radial weight $\om$, is bounded from $L^\infty$ to the Bloch space
$\mathcal{B}=\{f\in \H(\D):\, \sup_{z\in\D}|f'(z)|(1-|z|)<\infty\}$ if and only if $\om\in\DD$, and further, it is
bounded and onto if and only if $\om\in\DDD$~\cite{PR2018}.

The primary aim of this study is to establish a representation theorem, commonly known as an atomic decomposition, for functions in $A^{p,q}_\om$ in the sense of Coifman and Rochberg~\cite{CR}. This last-mentioned celebrated result concerning classical weighted Bergman spaces has been extended to the vector-valued Bergman spaces \cite{CJOT2008}, the Bergman spaces induced by exponential weights \cite{Arrousithesis}, the classical Dirichlet spaces \cite{GaGiPeTams2011}, the Fock spaces \cite{Zhu2012} and the classical mixed norm spaces on the upper half plane~\cite{RicciTaibleson1983}. In concrete means we will prove that each function in the mixed norm space $A^{p,q}_\om$ with $\om\in\DDD$ can be written as an adequate sum of normalized translates and dilates of powers of the Cauchy kernel in such a way that the coefficients belong to the doubled indexed complex-valued sequence space $\ell^{p,q}$.
For $0<p,q\le\infty$, the space $\ell^{p,q}$ consists sequences $\lambda=\{\lambda_{j,l}\}_{j,l}$ such that
    $$
    \left\| \lambda \right\|_{\ell^{p,q}}
    =\left\|\left\{\left\|\left\{\lambda_{j,l}\right\}_l\right\|_{\ell^p}\right\}_j\right\|_{\ell^q}<\infty,
    $$
where $\|\{a_n\}_n\|_{\ell^\infty}=\sup_n|a_n|$ and $\|\{a_n\}_n\|_{\ell^s}^s=\sum_n|a_n|^s$ for all $0<s<\infty$.

In order to state our main results we need to introduce some notation and recall that the class $\DD$
can be described by the equivalent conditions given in the following lemma~\cite[Lemma~2.1]{PelSum14}.

\begin{letterlemma}\label{Lemma:replacement-Lemmas-Memoirs}
Let $\om$ be a radial weight. Then the following conditions are
equivalent:
\begin{itemize}
\item[\rm(i)] $\om\in\DD$;
\item[\rm(ii)] There exist $C=C(\om)>0$ and $\b=\b(\om)>0$ such that
    \begin{equation*}
    \begin{split}
    \widehat{\om}(r)\le C\left(\frac{1-r}{1-t}\right)^{\b}\widehat{\om}(t),\quad 0\le r\le t<1;
    \end{split}
    \end{equation*}
\item[\rm(iii)] There exist $C=C(\om)>0$ and $\gamma=\gamma(\om)>0$ such that
    \begin{equation*}
    \begin{split}
    \int_0^t\left(\frac{1-t}{1-s}\right)^\g\om(s)\,ds
    \le C\widehat{\om}(t),\quad 0\le t<1.
    \end{split}
    \end{equation*}
\end{itemize}
\end{letterlemma}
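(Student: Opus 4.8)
The plan is to prove the two equivalences (i)$\Leftrightarrow$(ii) and (ii)$\Leftrightarrow$(iii) separately, using throughout that $\ho$ is nonincreasing with $\ho(1)=0$ and $\ho'=-\om$ almost everywhere. The implication (ii)$\Rightarrow$(i) is immediate: setting $t=\frac{1+r}{2}$ in (ii) gives $1-t=\frac{1-r}{2}$, so $\frac{1-r}{1-t}=2$ and hence $\ho(r)\le C2^{\b}\ho(\frac{1+r}{2})$, which is exactly (i).

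For (i)$\Rightarrow$(ii) I would iterate the halving condition. Writing $r_n=1-\frac{1-r}{2^n}$, so that $r_0=r$ and $r_{n+1}=\frac{1+r_n}{2}$, condition (i) yields $\ho(r_n)\le C\ho(r_{n+1})$ and therefore $\ho(r)\le C^n\ho(r_n)$ for every $n$. Given $0\le r\le t<1$, I would choose $n=\lceil\log_2\frac{1-r}{1-t}\rceil$, so that $r_n\ge t$ and $\ho(r_n)\le\ho(t)$ by monotonicity, while $2^n<2\frac{1-r}{1-t}$. Substituting gives $\ho(r)\le C^n\ho(t)\le C\,2^{\b}(\frac{1-r}{1-t})^{\b}\ho(t)$ with $\b=\log_2 C$, which is (ii); here one takes $C>1$, so that $\b>0$, noting that a nondegenerate weight forces the strict inequality.

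The equivalence (ii)$\Leftrightarrow$(iii) is the heart of the matter. For (iii)$\Rightarrow$(ii) I would restrict the integral in (iii) to $s\in[r,t]$ and use $(\frac{1-t}{1-s})^{\g}\ge(\frac{1-t}{1-r})^{\g}$ there, which gives $(\frac{1-t}{1-r})^{\g}(\ho(r)-\ho(t))\le C\ho(t)$; since $\frac{1-r}{1-t}\ge1$ this yields $\ho(r)\le(1+C)(\frac{1-r}{1-t})^{\g}\ho(t)$, i.e.\ (ii) with $\b=\g$. The reverse implication (ii)$\Rightarrow$(iii) is where the real work lies. I would decompose $[0,t)$ dyadically by $I_k=\{s:2^k(1-t)\le1-s<2^{k+1}(1-t)\}$, $k\ge0$. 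On $I_k$ one has $(\frac{1-t}{1-s})^{\g}\le2^{-k\g}$, while $\int_{I_k}\om\le\ho(1-2^{k+1}(1-t))\le C\,2^{(k+1)\b}\ho(t)$ by (ii), so that each block contributes at most a constant multiple of $2^{k(\b-\g)}\ho(t)$. Summing over $k$ then forces the decisive choice $\g>\b$, which makes the geometric series $\sum_k 2^{k(\b-\g)}$ converge and produces (iii) with a constant depending on $\b$, $\g$ and $C$.

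The main obstacle is precisely this last summation. The point to recognise is that the exponent $\g$ appearing in (iii) is not prescribed: it may be taken strictly larger than the decay exponent $\b$ coming from (ii), and this freedom is exactly what is needed for the dyadic blocks to sum to a finite multiple of $\ho(t)$. The estimates on the individual blocks are routine once the decomposition is set up; the balance between the gain $2^{-k\g}$ from the kernel and the growth $2^{k\b}$ of $\ho$ across the blocks is the single mechanism that drives the whole equivalence.
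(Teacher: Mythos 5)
Your argument is correct and is the standard one: the paper itself does not prove this lemma but imports it from \cite[Lemma~2.1]{PelSum14}, whose proof rests on exactly the same mechanisms you use, namely iterating the halving condition to pass from (i) to the power-type estimate (ii), and exploiting the freedom to take $\g>\b$ so that the kernel's gain beats the growth of $\widehat{\om}$ when passing from (ii) to (iii) (the reference organises that last step as an integration by parts in $\widehat\om$ rather than a dyadic block sum, but the two are interchangeable). The only point to tidy is that for large $k$ the endpoint $1-2^{k+1}(1-t)$ is negative, so you should bound $\int_{I_k}\om$ by $\widehat{\om}\bigl(\max\{0,\,1-2^{k+1}(1-t)\}\bigr)$ and note that (ii) still yields $\widehat{\om}(0)\le C(1-t)^{-\b}\widehat{\om}(t)\le C\,2^{(k+1)\b}\widehat{\om}(t)$ for such $k$.
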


We write $\varrho(a,z)=|\vp_a(z)|=\left|\frac{a-z}{1-\overline{a}z}\right|$
for the pseudohyperbolic distance between $z$ and $a$, and
$\Delta(a,r)=\{z:\varrho(a,z)<r\}$ for the pseudohyperbolic disc of
center $a\in\D$ and radius $r\in(0,1)$. A sequence
$\{z_k\}_{k=0}^\infty$ in $\D$ is called separated if $\inf_{k\ne j}\varrho(z_k,z_j)>0$.
Now for each $K>1$, a sequence $\{z_k\}$ in $\D$, is re-indexed in the following way depending on $K$:
For each $j\in\N\cup\{0\}$, let $\{z_{j,l}\}_l$ denote the points of the sequence $\{z_k\}$ in the annulus $A_j=A_j(K)=\{z:r_j\le|z|<r_{j+1}\}$, where $r_j=r_j(K)=1-K^{-j}$. The following result contains a half of the aforementioned atomic decomposition for functions in $\Apqo$.

\begin{theorem}\label{th:atomicdecomx}
Let $0<p\le \infty$, $0<q<\infty$, $1<K<\infty$, $\omega\in \DDD$, and $\{z_k\}_{k=0}^\infty$ a separated sequence in $\D$.
Let $\beta=\b(\om)>0$ and $\gamma=\gamma(\om)>0$ be those of Lemma~\ref{Lemma:replacement-Lemmas-Memoirs}(ii) and (iii).
If
    \begin{equation}\label{eq:M}
    M>1+\frac{1}{p}+\frac{\b+\gamma}{q}
    %,\frac{p'(\g+\b)+\frac{p'(\g+\b)^2}{q}+\frac{q}{p}}{q+p'(\g+\b)}\right\}
    \end{equation}
and $\lambda=\{\lambda_{j,l}\}\in \ell^{p,q}$, then the function $F$ defined by
    $$
    F(z)=\sum_{j,l}\lambda_{j,l}\frac{(1-|z_{j,l}|)^{M-\frac{1}{p}}\omg(z_{j,l})^{-\frac{1}{q}}}{\left(1-\overline{z_{j,l}}z\right)^M}
    $$
belongs to $\H(\D)$, and there exists a constant $C=C(K,M,\om,p,q)>0$ such that
    \begin{equation}\label{eq:Fnormcontrol}
    \Vert F\Vert_{A^{p,q}_\omega}\leq C\left\|\lambda\right\|_{\ell^{p,q}}.
    \end{equation}
\end{theorem}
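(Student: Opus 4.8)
The plan is to exploit the annular decomposition $A_n=A_n(K)$ simultaneously for the synthesis points $z_{j,l}$ and for the radius $r$ at which the integral means $M_p(r,F)$ are taken, and to reduce \eqref{eq:Fnormcontrol} to the boundedness of an explicit nonnegative matrix on a sequence space. Grouping the atoms by annuli, I write $F=\sum_j G_j$ with $G_j(z)=\sum_l\lambda_{j,l}(1-|z_{j,l}|)^{M-\frac1p}\ho(z_{j,l})^{-\frac1q}(1-\overline{z_{j,l}}z)^{-M}$. Since $\om\in\DDD$, the doubling of $\ho$ gives $\ho(z_{j,l})\asymp\ho(r_j)=:\ho_j$ and $1-|z_{j,l}|\asymp K^{-j}$ uniformly in $l$, so the coefficient multiplying each Cauchy kernel of $G_j$ is comparable to $K^{-j(M-\frac1p)}\ho_j^{-\frac1q}$. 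Analyticity of $F$ is a soft point: on $\{|z|\le\rho\}$ the kernels are bounded away from their poles, and Lemma~\ref{Lemma:replacement-Lemmas-Memoirs}(ii) bounds $\ho_j^{-1/q}$ by a fixed power of $K^{j}$, so absolute and locally uniform convergence follows from a geometric series that converges under \eqref{eq:M}, whence $F\in\H(\D)$. The quantitative backbone is the elementary fact that for $z_{j,l}\in A_j$ and $|z|=r\in A_n$ one has $1-|z_{j,l}|r\asymp K^{-\min(j,n)}$, which together with the standard estimate $\int_0^{2\pi}|1-\overline{a}re^{it}|^{-\lambda}\,dt\asymp(1-|a|r)^{-(\lambda-1)}$ for $\lambda>1$ shows that $M_p(r,\cdot)$ of a single kernel $z\mapsto(1-\overline{z_{j,l}}z)^{-M}$ is $\asymp K^{(M-\frac1p)\min(j,n)}$.

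Next I would estimate the inner sum, i.e. $M_p(r,G_j)$ for $r\in A_n$, writing $a_j:=\|\{\lambda_{j,l}\}_l\|_{\ell^p}$ and $(j-n)^{+}=\max(0,j-n)$. When $0<p\le1$ the inequality $|\sum_l c_l|^{p}\le\sum_l|c_l|^{p}$ under the angular integral reduces $G_j$ to the single-kernel bound and gives $M_p(r,G_j)\lesssim K^{-(M-\frac1p)(j-n)^{+}}\ho_j^{-1/q}a_j$. When $p>1$ the kernels overlap precisely when the atoms lie closer to the boundary than $r$ (that is, $j>n$), and here the separation of $\{z_k\}$ is indispensable: in $A_j$ the angular coordinates of the points are $\gtrsim K^{-j}$ apart, so any arc of length $t$ meets $\lesssim 1+tK^{j}$ of them. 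Splitting the kernel as $(1-\overline{z_{j,l}}z)^{-M_1}(1-\overline{z_{j,l}}z)^{-M_2}$ with $M_1+M_2=M$ and applying H\"older in $l$, the factor $\big(\sum_l|1-\overline{z_{j,l}}z|^{-M_2p'}\big)^{1/p'}$ is summed against this density bound and produces an overlap factor $K^{(j-n)^{+}/p'}$, so that $M_p(r,G_j)\lesssim K^{-(M-1)(j-n)^{+}}\ho_j^{-1/q}a_j$. In both regimes I obtain, uniformly for $r\in A_n$, the master bound $M_p(r,G_j)\lesssim K^{-\sigma(j-n)^{+}}\ho_j^{-1/q}a_j$ with $\sigma=M-\frac1p$ for $p\le1$ and $\sigma=M-1$ for $p\ge1$ (the case $p=\infty$ being the $\ell^\infty$ endpoint of the same computation).

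It remains to sum over $j$ and integrate in $r$, using $\int_{A_n}\om\le\ho_n$. Whenever an exponent involved is $\le1$ I aggregate by subadditivity $|\sum(\cdot)|^{s}\le\sum|\cdot|^{s}$ (over $l$ this is already used above; over $j$ one applies it to $M_p^{p}$ when $p<1$, and then to the outer $r$-integral when the remaining index is $\le1$), and whenever an exponent is $>1$ by Minkowski together with a Schur test. In all cases the matter is reduced to the boundedness on $\ell^{q/\min(1,p)}$ of a nonnegative matrix whose $(n,j)$ entry is a power of the weight ratio $\ho_n/\ho_j$ times a geometric factor $K^{-c(j-n)^{+}}$ with $c>0$, and Schur's test applies once its rows and columns are uniformly summable: for $j\ge n$ the ratio is controlled by the $\Dd$-part of the doubling (geometric decay of $\ho_n$), while for $j<n$ Lemma~\ref{Lemma:replacement-Lemmas-Memoirs}(ii) gives $\ho_n/\ho_j\lesssim K^{\beta(j-n)}$; the resulting geometric series converge exactly because $\sigma q>\beta$, which is guaranteed by \eqref{eq:M} (the surplus $\tfrac1p+\tfrac{\gamma}{q}$ there leaves ample room, part of it being absorbed through the sharper radial bookkeeping afforded by Lemma~\ref{Lemma:replacement-Lemmas-Memoirs}(iii)). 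Putting the pieces together, for $p\ge1$ one gets $\|F\|_{\Apqo}^{q}\lesssim\sum_n\ho_n\big(\sum_j K^{-\sigma(j-n)^{+}}\ho_j^{-1/q}a_j\big)^{q}\lesssim\sum_j a_j^{q}=\|\lambda\|_{\lpq}^{q}$, and the $p<1$ variant (Schur on $\ell^{q/p}$ after the $p$-subadditivity step) yields the same conclusion, which is \eqref{eq:Fnormcontrol}. I expect the main obstacle to be the inner estimate for $p>1$, where the overlap of Cauchy kernels must be tamed through the separation and bounded angular density of $\{z_k\}$ in each annulus; the secondary difficulty is the two-parameter bookkeeping linking the $\ell^p$-aggregation (angular, via $M_p$) with the $\ell^q$-aggregation (radial, via $\om$), which the Schur-test formulation is designed to streamline.
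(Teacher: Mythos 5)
Your argument is correct in substance but follows a genuinely different route from the paper. The paper splits the proof into six cases according to $p$ and $q$; for $1<p<\infty$ it runs a H\"older argument with two auxiliary exponents $\eta,\theta$ subject to a system of six inequalities, and in every case with $p<q$ it invokes Muckenhoupt's two-weight Hardy inequalities \cite{Muck} for suitably chosen step functions $U,V,f$ on $(0,\infty)$. You instead discretize both the atom index and the radial variable over the same annuli $A_j(K)$, prove the single master bound $M_p(r,G_j)\lesssim K^{-\sigma(j-n)^{+}}\ho(r_j)^{-1/q}\|\{\lambda_{j,l}\}_l\|_{\ell^p}$ for $r\in A_n$ (with $\sigma=M-\tfrac1p$ or $M-1$), and close with a Schur test: bounded row and column sums give boundedness on $\ell^q$ for $q\ge1$, and $q$-subadditivity handles $q<1$. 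I checked the exponent bookkeeping in both regimes of $p$ and in the reduction to the matrix bound, and it is consistent; your H\"older splitting $M=M_1+M_2$ with $M_1p>1$, $M_2p'>1$ is simpler than the paper's $(\eta,\theta)$ system, and your argument in fact never needs Lemma~\ref{Lemma:replacement-Lemmas-Memoirs}(iii) or the $\gamma/q$ surplus in \eqref{eq:M} --- only $\sigma>\beta/q$ and the existence of $\alpha>0$ in \eqref{Lemma:weights-in-R} --- whereas the paper uses part (iii) to estimate $\int_0^{r_j}\om(r)(1-r_jr)^{-\kappa}\,dr$ in the cases $q\le p$. What the paper's route buys is sharper two-weight conditions (Muckenhoupt's theorem is an equivalence, the Schur test only a sufficient condition), which is irrelevant here since both close under \eqref{eq:M}.

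One attribution in your Schur-test verification is reversed and should be fixed before this is written up: for $j<n$ the needed decay $\ho(r_n)/\ho(r_j)\lesssim K^{-\alpha(n-j)}$ comes from the $\Dd$-half of the hypothesis, i.e.\ from \eqref{Lemma:weights-in-R}, not from Lemma~\ref{Lemma:replacement-Lemmas-Memoirs}(ii); part (ii) gives $\ho(r_j)\lesssim K^{\beta(n-j)}\ho(r_n)$, which is a bound in the opposite direction and yields no decay for that block. Symmetrically, for $j\ge n$ the growth control $\ho(r_n)/\ho(r_j)\lesssim K^{\beta(j-n)}$ is exactly Lemma~\ref{Lemma:replacement-Lemmas-Memoirs}(ii), i.e.\ the $\DD$-half, not $\Dd$. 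Both estimates you need are available since $\om\in\DDD$, so this is a fixable slip rather than a gap, but as stated the inequality you quote for $j<n$ is false.
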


One important tool in the proof of Theorem~\ref{th:atomicdecomx}, to be given in Section~\ref{sec2}, is the description due to Muckenhoupt~\cite{Muck} of the weights $U$ and $V$ such that the Hardy operators $\int_0^x f(t)\,dt$ and $\int_x^\infty f(t)\,dt$ are bounded from the Lebesgue space $L^s(U^s, (0,\infty))$ to $L^s(V^s, (0,\infty))$.

To complete the atomic decomposition we are after, for each $K\in\N\setminus\{1\}$, $j\in\N\cup\{0\}$ and $l=0,1,\ldots,K^{j+3}-1$, define the dyadic polar rectangle as
    \begin{equation}\label{eq:qjl}
    Q_{j,l}=\left\{z\in \D:r_j\le |z|<r_{j+1}, \arg \,z\in \left[2\pi\frac{l}{K^{j+3}},2\pi\frac{l+1}{K^{j+3}}\right)\right\},
    \end{equation}
where $r_j=r_j(K)=1-K^{-j}$ as before, and denote its center by $\z_{j,l}$. For each $M\in\N$ and $k=1,\ldots,M^2$, the rectangle $Q_{j,l}^k$ is defined as the result of dividing $Q_{j,l}$ into $M^2$ pairwise disjoint rectangles of equal Euclidean area, and the centers of these squares are denoted by $\z_{j,l}^k$, respectively. Write $\lambda=\{\lambda_{j,l,k}\}\in\lpq$ if
    $$
    \Vert \lambda \Vert_{\ell^{p,q}}=\left(\sum_{j=0}^\infty \left(\sum_{l=0}^{K^{j+3}-1} \sum_{k=1}^{M^2}|\lambda_{j,l,k}|^p \right)^\frac{q}{p} \right)^\frac{1}{q}<\infty.
    $$
The representation part of our result reads as follows and will be proven in Section~\ref{sec3}.

\begin{theorem}\label{th:reverse}
Let $0<p\le\infty$, $0<q<\infty$, $K\in\N\setminus\{1\}$ and $\omega\in \DDD$ such that \eqref{K} holds. Then
there exists $M=M(p,q,\omega)>0$ such that $\Apqo$ consists of functions of the form
    \begin{equation}\label{Reveq}
    f(z)=\sum_{j,l,k} \lambda(f)_{j,l}^k \frac{(1-|\z_{j,l}^k|^2)^{M-\frac1p}\widehat{\om}(r_j)^{-\frac1q}}{(1-\overline{\z_{j,l}^k}z)^{M}},\quad z\in\D,
    \end{equation}
where $\lambda(f)=\{\lambda(f)_{j,l}^k\}\in\ell^{p,q}$ and
    \begin{equation}\label{eq:Fnormcontro2l}
   \left\|\{\lambda(f)_{j,l}^k\} \right\|_{\ell^{p,q}}\asymp\Vert f\Vert_{A^{p,q}_\omega}.
    \end{equation}
\end{theorem}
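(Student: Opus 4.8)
The plan is to obtain the decomposition \eqref{Reveq} by combining a high-order reproducing formula with a Riemann-sum discretization over the grid $\{Q_{j,l}^k\}$, and then to absorb the discretization error by a Neumann series, choosing $M$ large. Concretely, I would introduce two operators. The synthesis operator $T$ sends a sequence $\lambda$ to the atomic sum in \eqref{Reveq}; this is essentially the map of Theorem~\ref{th:atomicdecomx}. The analysis operator $S$ sends $f$ to the candidate coefficients $\lambda(f)_{j,l}^k=\frac{c_M}{M^2}f(\z_{j,l}^k)(1-|\z_{j,l}^k|^2)^{1/p}\,\omg(r_j)^{1/q}$, where $\z_{j,l}^k$ is the center of $Q_{j,l}^k$. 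The goal is to show that $S\colon\Apqo\to\lpq$ and $T\colon\lpq\to\Apqo$ are bounded and that $TS$ differs from the identity by an operator of small norm.

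First I would record that point evaluations are bounded on $\Apqo$ and that, for $M$ large enough, every $f\in\Apqo$ satisfies the standard weighted reproducing formula $f(z)=c_M\int_\D f(w)(1-|w|^2)^{M-2}(1-\overline wz)^{-M}\,dA(w)$ with $c_M=(M-1)/\pi$; its admissibility follows from Lemma~\ref{Lemma:replacement-Lemmas-Memoirs}(ii), which forces $\omg$ to decay at most polynomially and hence embeds $\Apqo$ into a classical Bergman space where the formula holds. Next I would prove the coefficient estimate $\|\lambda(f)\|_{\lpq}\lesssim\|f\|_{\Apqo}$ (boundedness of $S$). Since $Q_{j,l}^k$ is hyperbolically comparable to a disc of radius $\asymp1/M$ about $\z_{j,l}^k$, subharmonicity of $|f|^p$ gives $|f(\z_{j,l}^k)|^p(1-|\z_{j,l}^k|^2)\lesssim \frac{M^2}{1-r_j}\int_{\widetilde Q_{j,l}^k}|f|^p\,dA$ for a fixed dilate $\widetilde Q_{j,l}^k$; summing over $l,k$ with bounded overlap and using $\int_{A_j}|f|^p\,dA\asymp(1-r_j)\,\langle M_p^p\rangle_{A_j}$ (the radial average of $M_p^p(\cdot,f)$ over $A_j$) reduces the inner $\ell^p$ sum to this average. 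Finally, monotonicity of $r\mapsto M_p(r,f)$ together with the doubling identity $\omg(r_j)\asymp\int_{A_j}\om$ — which is exactly where hypothesis \eqref{K} (that is, $\om\in\Dd$) enters — lets me dominate $\sum_j\omg(r_j)\,\langle M_p^p\rangle_{A_j}^{q/p}$ by $\sum_j\int_{A_{j+1}}M_p^q(r,f)\om(r)\,dr\le\|f\|_{\Apqo}^q$. The case $p=\infty$ is identical with suprema in place of the $\ell^p$ averages.

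The heart of the argument is the error bound. Writing $|Q_{j,l}^k|\asymp(1-|\z_{j,l}^k|^2)^2/M^2$, the sum $TSf(z)=\frac{c_M}{M^2}\sum_{j,l,k}f(\z_{j,l}^k)(1-|\z_{j,l}^k|^2)^M(1-\overline{\z_{j,l}^k}z)^{-M}$ is precisely the Riemann sum of the reproducing integral, so $(I-TS)f(z)$ equals $c_M\sum_{j,l,k}\int_{Q_{j,l}^k}[g_z(w)-g_z(\z_{j,l}^k)]\,dA(w)$ with $g_z(w)=f(w)(1-|w|^2)^{M-2}(1-\overline wz)^{-M}$. On each $Q_{j,l}^k$ a gradient estimate bounds the bracket by $\frac{C}{M}$ times the size of $g_z$ near $\z_{j,l}^k$, so that $|(I-TS)f|$ is controlled pointwise by $\frac{C}{M}$ times a positive integral operator of the same type as those treated in the proof of Theorem~\ref{th:atomicdecomx}. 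The main obstacle is to show that this dominating operator is bounded on $\Apqo$ with a bound independent of $M$; this is where I would reuse the Schur-type estimates, the Muckenhoupt description of the Hardy operators, and the $\DD$/$\Dd$ characterizations of Lemma~\ref{Lemma:replacement-Lemmas-Memoirs} that underlie Theorem~\ref{th:atomicdecomx}, with the constraint \eqref{eq:M} on $M$ guaranteeing convergence. Granting this, $\|I-TS\|_{\Apqo\to\Apqo}\le C/M<\frac12$ once $M=M(p,q,\om)$ is large, whence $TS$ is invertible by a Neumann series. To conclude, set $\lambda(f)=S\,(TS)^{-1}f\in\lpq$; then $T\lambda(f)=TS(TS)^{-1}f=f$ is exactly \eqref{Reveq}, and $\|\lambda(f)\|_{\lpq}\le\|S\|\,\|(TS)^{-1}\|\,\|f\|_{\Apqo}\lesssim\|f\|_{\Apqo}$. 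The reverse inequality in \eqref{eq:Fnormcontro2l} is Theorem~\ref{th:atomicdecomx} applied to the separated sequence $\{\z_{j,l}^k\}$, after noting that $\omg(\z_{j,l}^k)\asymp\omg(r_j)$ and $1-|\z_{j,l}^k|^2\asymp1-|\z_{j,l}^k|$ on $A_j$ make its atoms comparable to those in \eqref{Reveq}; this gives $\|f\|_{\Apqo}=\|T\lambda(f)\|_{\Apqo}\lesssim\|\lambda(f)\|_{\lpq}$ and hence the equivalence \eqref{eq:Fnormcontro2l}.
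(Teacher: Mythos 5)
Your strategy is the paper's own: a high--order reproducing formula $P_\eta f=f$ justified by the embedding $\Apqo\subset A^1_\eta$ (the paper's Lemma~\ref{le:a1}), a Riemann--sum discretization over the grid $\{Q_{j,l}^k\}$ (the paper's operator $S_\eta$, which is your $TS$), the coefficient estimate $\|\lambda(f)\|_{\lpq}\lesssim\|f\|_{\Apqo}$ via subharmonicity and the condition \eqref{K} (the paper's Lemma~\ref{th:equivnorm}), a gradient bound for the discretization error whose $\Apqo$-norm is controlled by Theorem~\ref{th:atomicdecomx}, and a Neumann series; the paper's iteration $f_n=S_\eta\bigl(f-\sum_{m=1}^{n-1}f_m\bigr)$ is precisely your expansion of $(TS)^{-1}$, and your reverse inequality via Theorem~\ref{th:atomicdecomx} matches the paper's.

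There is, however, one step that fails as written. You use a single parameter $M$ both as the exponent of the reproducing kernel and as the subdivision count, and then claim $\|I-TS\|_{\Apqo\to\Apqo}\le C/M$ with $C$ independent of $M$. The mesh size of $Q_{j,l}^k$ is indeed $\asymp(1-r_j)/M$, but the gradient of $g_z(w)=f(w)(1-|w|^2)^{M-2}(1-\overline{w}z)^{-M}$ carries a factor of order $M/(1-|w|)$ coming from differentiating the $M$-th powers (these are exactly the factors $\eta$ and $\eta+2$ in \eqref{deriv1} and \eqref{deriv2}). The two cancel, so the bracket $g_z(w)-g_z(\z_{j,l}^k)$ is only $O(1)$ times the size of $g_z$, not $O(1/M)$, and no smallness is obtained; the Neumann series need not converge. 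The paper avoids this by decoupling the two roles: it fixes $\eta$ satisfying \eqref{eq:M} once and for all, keeps the kernel exponent $\eta+2$ fixed, and lets only the number $M^2$ of subdivisions of each $Q_{j,l}$ grow, so that $\mathrm{diam}\,Q_{j,l}^k\asymp(1-r_j)/M$ while the gradient constant depends only on $\eta$; this is what produces the genuine factor $1/M$ in \eqref{enum13}. With that correction (and replacing your normalization $c_M/M^2$ by the exact Jacobian $|Q_{j,l}^k|(1-|\z_{j,l}^k|^2)^{-2}$, so that $T\lambda(f)=f$ holds as an identity rather than only up to constants), your argument coincides with the paper's proof.
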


Atomic decompositions, and even partial results of the same fashion, for functions in spaces of analytic functions are very useful in operator theory.
In particular, they can be used to describe dual spaces \cite{RicciTaibleson1983} or to study basic questions such as the boundedness, the compactness or the Schatten class membership of concrete operators
\cite{AlCo,Arrousithesis,OC,GaGiPeTams2011,PP,JAJR,JAJREMB,Zhu,Zhu2012}. In this study we will use Theorem~\ref{th:atomicdecomx} to describe those positive Borel measures $\mu$ on $\D$ such that the differentiation operator defined by $D^{(n)}(f)=f^{(n)}$ for $n\in\N\cup\{0\}$ is bounded from $A^{p,q}_\om$ to the Lebesgue space $L^s_\mu$. The special case $n=0$ gives a description of the $s$-Carleson measures for $A^{p,q}_\om$. Carleson measures have attracted a lot of attention during the last decades because of their numerous applications in the operator theory and elsewhere, and descriptions of these measures have been obtained for many spaces of analytic functions such as the Hardy spaces \cite{CarlesonL58,CarlesonL62,Duren,Durenhp,Lu90}, the classical Bergman spaces \cite{Luecking,Zhu}, the Bergman spaces induced by Bekoll\'e-Bonami, rapidly decreasing or doubling weights \cite{OC,PP,JAJR,JAJREMB}, the Fock spaces \cite{Zhu2012} and the classical mixed norm spaces \cite{Luecking}, to name a few instances.

To state our result on the differentiation operator, write
    $$
    T_{r,u,v}(z)=\frac{\mu(\Delta(z,r))}{(1-|z|)^{u}\whw(z)^v},\quad z\in\D,
    $$
for a positive Borel measure $\mu$ on $\D$, $0<r<1$ and $0<u,v<\infty$.

\begin{theorem}\label{Theorem:Carleson}
Let $0<p,q,s<\infty$, $n\in\N\cup\{0\}$, $\omega\in\DDD$, $0<r<1$, $\mu$ a positive Borel measure on $\D$, and let $K=K(\om)\in\N\setminus\{1\}$ such that \eqref{K} holds. Then the following statements are equivalent:
    \begin{enumerate}
    \item[(i)] $D^{(n)}:\Apqo \to L^s_\mu$ is bounded;
    \item[(ii)] $\displaystyle\left\{\mu(Q_{j,l})K^{sj\left(n+\frac{1}{p}\right)}\whw(r_j)^{-\frac{s}{q}}\right\}_{j,l}\in\ell^{\left(\frac{p}{s}\right)',\left(\frac{q}{s}\right)'}$;
    \item[(iii)] $T_{r,u,v}\in L^{\left(\frac{p}{s}\right)',\left(\frac{q}{s}\right)'}_\omega$, where
\begin{itemize}
\item[(a)] $u=sn+1$ and $v=1$ if $s<\min\{p,q\}$;
\item[(b)] $u=sn+\frac1p$ and $v=1$ if $p\leq s<q$;
\item[(c)] $u=sn+1$ and $v=\frac{s}{q}$ if $q\leq s<p$;
\item[(d)] $u=s(n+\frac1p)$ and $v=\frac{s}{q}$ if $s\ge\max\{p,q\}$.
\end{itemize}
    \end{enumerate}
Moreover,
    $$
    \Vert D^{(n)} \Vert^s_{\Apqo \to L^s_\mu}
    \asymp\left\|\left\{\mu(Q_{j,l})K^{sj\left(n+\frac{1}{p}\right)}\whw(r_j)^{-\frac{s}{q}}\right\}_{j,l}\right\|_{\ell^{\left(\frac{p}{s}\right)',\left(\frac{q}{s}\right)'}}
    \asymp\|T_{r,u,v}\|_{L^{\left(\frac{p}{s}\right)',\left(\frac{q}{s}\right)'}_\omega}.
    $$
\end{theorem}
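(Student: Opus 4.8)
The plan is to route everything through the discrete condition (ii): I would prove $(ii)\Leftrightarrow(i)$ using the two halves of the atomic decomposition, and then obtain $(ii)\Leftrightarrow(iii)$ by discretizing the integral that defines the $L^{(p/s)',(q/s)'}_\omega$-norm. The doubling hypothesis $\omega\in\DDD$ is the workhorse: on each annulus $A_j$ it gives $1-|z|\asymp K^{-j}$, $\widehat\omega(z)\asymp\widehat\omega(r_j)$ and $\int_{A_j}\omega\asymp\widehat\omega(r_j)$, while $A_j$ is cut into $K^{j+3}$ rectangles $Q_{j,l}$ of equal angular width; the finer lattice $\{Q_{j,l}^k\}$ of Theorem~\ref{th:reverse} refines this by the fixed factor $M^2$, so the two sequence conditions agree up to constants.

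For $(ii)\Rightarrow(i)$ I would take $f\in\Apqo$, write it via Theorem~\ref{th:reverse} as a sum of the normalized Cauchy-kernel atoms with $\lambda(f)\in\ell^{p,q}$, and bound $\int_\D|f^{(n)}|^s\,d\mu$. The key reduction is $\int_\D|f^{(n)}|^s\,d\mu\lesssim\sum_{j,l}\mu(Q_{j,l})\sup_{Q_{j,l}}|f^{(n)}|^s$, which follows from the subharmonicity of $|f^{(n)}|^s$ together with a sub-mean-value estimate over a slightly dilated rectangle. Differentiating an atom centred at $\zeta_{j,l}^k$ a total of $n$ times multiplies its size by $(1-|\zeta_{j,l}^k|)^{-n}$, so that, combined with the normalization $(1-|\zeta|)^{M-1/p}\widehat\omega(r_j)^{-1/q}$, each atom contributes $\asymp K^{sj(n+1/p)}\widehat\omega(r_j)^{-s/q}$; this is exactly the weight appearing in (ii). One is then left with a bilinear pairing of $\{\mu(Q_{j,l})K^{sj(n+1/p)}\widehat\omega(r_j)^{-s/q}\}$ against $\{|\lambda(f)_{j,l}^k|^s\}\in\ell^{p/s,q/s}$, and H\"older's inequality in the mixed-norm sequence spaces, via $(\ell^{p/s,q/s})^*=\ell^{(p/s)',(q/s)'}$, closes the estimate. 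For $(i)\Rightarrow(ii)$ I would argue by testing: for a finitely supported nonnegative $b$ in the unit ball of $\ell^{p/s,q/s}$, Theorem~\ref{th:atomicdecomx} produces a single $F$ with $\|F\|_{\Apqo}\lesssim1$ whose $n$-th derivative satisfies $|F^{(n)}|^s$ comparable to the prescribed weights on each $Q_{j,l}$; inserting $F$ into $\int_\D|F^{(n)}|^s\,d\mu\le\|D^{(n)}\|^s\|F\|^s$ and taking the supremum over such $b$ recovers (ii) together with the stated norm comparison.

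The equivalence $(ii)\Leftrightarrow(iii)$ is where the four cases appear. After replacing $\mu(\Delta(z,r))$ by $\mu(Q_{j,l})$, $1-|z|$ by $K^{-j}$ and $\widehat\omega(z)$ by $\widehat\omega(r_j)$ for $z\in Q_{j,l}$ (with $r$ adjusted to $K$), the $L^{(p/s)',(q/s)'}_\omega$-norm of $T_{r,u,v}$ becomes a mixed-norm sum over $(j,l)$. The subtlety is purely the exponent bookkeeping: $(p/s)'=\infty$ precisely when $s\ge p$ and $(q/s)'=\infty$ precisely when $s\ge q$, so the angular and the radial parts of the mixed norm degenerate into suprema in exactly those regimes. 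The localized normalization matching a supremum scale differs from the one matching an integrated ($L^r$) scale, which is what forces $u$ to carry the factor $\tfrac1p$ instead of $1$ when $s\ge p$, and $v$ to carry $\tfrac{s}{q}$ instead of $1$ when $s\ge q$; running through the four sign patterns of $(s-p,s-q)$ reproduces the displayed pairs $(u,v)$ in (a)--(d).

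I expect the discretization estimate --- obtaining $\int_\D|f^{(n)}|^s\,d\mu\asymp\sum_{j,l}\mu(Q_{j,l})\sup_{Q_{j,l}}|f^{(n)}|^s$ with both inequalities and with constants uniform across the four regimes --- to be the main obstacle, since it must transfer between the continuous $n$-th derivative and the discrete atomic coefficients without losing the sharp powers of $K^{-j}$ and $\widehat\omega(r_j)$, and must remain valid at the degenerate endpoints where $(p/s)'$ or $(q/s)'$ equals $\infty$. The second delicate point is the construction in $(i)\Rightarrow(ii)$: one must combine the atoms so that the $n$-th derivative of $F$ is genuinely large on each $Q_{j,l}$---not merely in an averaged sense---while keeping $\|F\|_{\Apqo}$ under control through Theorem~\ref{th:atomicdecomx}.
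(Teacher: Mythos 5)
Your outline of the equivalence (ii)$\Leftrightarrow$(iii) matches the paper's argument (discretize $T_{r,u,v}$ over the $Q_{j,l}$, observe that $(p/s)'=\infty$ exactly when $s\ge p$ and $(q/s)'=\infty$ exactly when $s\ge q$, and run the four cases), but both halves of your (i)$\Leftrightarrow$(ii) have genuine gaps. For (i)$\Rightarrow$(ii) you ask Theorem~\ref{th:atomicdecomx} to produce a \emph{single} $F$ with $|F^{(n)}|^s$ bounded below by the prescribed weight on every $Q_{j,l}$ simultaneously; you correctly flag that the derivative must be large ``not merely in an averaged sense,'' but that is precisely the step you do not supply, and in general it fails because of cancellation among the atoms. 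The paper's resolution is to accept the averaged sense: it takes $F_t$ with Rademacher coefficients $a_{j,l}(t)$, notes $\|F_t\|_{\Apqo}\lesssim\|\lambda\|_{\ell^{p,q}}$ uniformly in $t$ by Theorem~\ref{th:atomicdecomx}, and integrates in $t$ using Khinchine's inequality, which bounds $\int_0^1\|F_t^{(n)}\|_{L^s_\mu}^s\,dt$ from below by the square function and hence by the diagonal sum $\sum_{j,l}|\lambda_{j,l}|^s\mu(Q_{j,l})K^{js(n+1/p)}\widehat{\om}(r_j)^{-s/q}$; the duality $(\ell^{p/s,q/s})^*=\ell^{(p/s)',(q/s)'}$ of \cite[Theorem~1]{Nakamura} then gives (ii). Without the randomization this direction does not close.

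For (ii)$\Rightarrow$(i) your estimate silently keeps only the diagonal term: from $f=\sum_{i,m,k}\lambda(f)_{i,m}^k(\cdots)$ you conclude that on $Q_{j,l}$ the quantity $\sup_{Q_{j,l}}|f^{(n)}|^s$ is controlled by $|\lambda(f)_{j,l}^k|^sK^{sj(n+1/p)}\widehat{\om}(r_j)^{-s/q}$, but every atom centred at $\z_{i,m}^k$ with $(i,m)\ne(j,l)$ also contributes to $f^{(n)}$ on $Q_{j,l}$, and controlling that off-diagonal sum in the $\ell^{p/s,q/s}$ pairing amounts to re-proving a version of Theorem~\ref{th:atomicdecomx} for the differentiated kernels --- substantial work in all four regimes of $(p,q,s)$. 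The paper sidesteps the atomic representation entirely in this direction: it uses the Cauchy estimate \eqref{Cauchy-integral} to get $\sup_{Q_{j,l}}|f^{(n)}|\lesssim K^{jn}\widehat{f}_{j,l}$, the trivial bound $\int_{Q_{j,l}}|f^{(n)}|^s\,d\mu\le\mu(Q_{j,l})\bigl(\sup_{Q_{j,l}}|f^{(n)}|\bigr)^s$ (no subharmonicity needed there), and then Lemma~\ref{th:equivnorm}, i.e.\ $\|\{K^{-j/p}\widehat{\om}(r_j)^{1/q}f_{j,l}\}\|_{\ell^{p,q}}\asymp\|f\|_{\Apqo}$, before applying H\"older via \cite[Theorem~1]{Nakamura}. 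You should restructure this half around Lemma~\ref{th:equivnorm} rather than around Theorem~\ref{th:reverse}.
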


Theorem~\ref{Theorem:Carleson} will be proven in Section~\ref{sec5}.

\section{Proof of Theorem~\ref{th:atomicdecomx}}\label{sec2}

Throughout the proof and in several other occasions in this work we will use the fact that a radial weight $\om$ belongs to $\Dd$ if and only if
there exist $C=C(\om)>0$ and $\a=\a(\om)>0$ such that
    \begin{equation}  \label{Lemma:weights-in-R}
    \begin{split}
    \widehat{\om}(t)\le C\left(\frac{1-t}{1-r}\right)^{\a}\widehat{\om}(r),\quad 0\le r\le t<1.
    \end{split}
    \end{equation}
This equivalence can be proved by following the ideas used in the proof of \cite[Lemma~2.1]{PelSum14}.

To see that the function $F$ defined in Theorem~\ref{th:atomicdecomx} is analytic, observe first that $\#\{z_k\in A_j\}\lesssim K^j$ for all $j\in\N\cup\{0\}$ since $\{z_k\}$ is a separated sequence by the hypothesis. This together with Lemma~\ref{Lemma:replacement-Lemmas-Memoirs}(ii) and the hypothesis \eqref{eq:M} yields
    \begin{equation*}
    \begin{split}
 	|F(z)|\le
  	&\sum_{j,l}|\lambda_{j,l}|\frac{(1-|z_{j,l}|)^{M-\frac{1}{p}}\omg(z_{j,l})^{-\frac{1}{q}}}{|1-\overline{z_{j,l}}z|^{M}}
  	%&\lesssim \frac{\Vert \lambda \Vert_\infty}{(1-r)^M} \sum_{j}(1-r_j)^{M-\frac{1}{p}}\omg(r_j)^{\frac{-1}{q}}\sum_l 1 \\
  	\lesssim \frac{\Vert \lambda \Vert_{\ell^\infty}}{(1-r)^M} \sum_{j=0}^\infty K^{-j\left(M-\frac{1}{p}-1\right)}\omg(r_j)^{-\frac{1}{q}}\\
    &\lesssim\frac{\Vert \lambda \Vert_{\ell^\infty}}{\omg(0)^{\frac{1}{q}}(1-r)^M} \sum_{j=0}^\infty K^{-j\left(M-\frac{1}{p}-1-\frac{\b}{q}\right)}
  	\asymp\frac{\Vert \lambda \Vert_{\ell^\infty}}{(1-r)^M},\quad |z|<r<1,
  	\end{split}
    \end{equation*}
and hence $F\in\H(\D)$.

From now on we write $c^p_j=\sum_l|\lambda_{j,l}|^p$. By following the idea used in the half plane case~\cite[(1.5) Theorem]{RicciTaibleson1983}, we will split the proof into six cases according to the values of $p$ and $q$:
\begin{enumerate}
\item[\rm Case] 1.1: $\, 0<p\le 1$ and $q\le p$;
\item[\rm Case] 1.2: $\, 0<p\le 1$ and $p<q$;
\item[\rm Case] 2.1: $\, 1<p<\infty$ and $q\le p$;
\item[\rm Case] 2.2: $\, 1<p<\infty$ and $p<q$;
\item[\rm Case] 3.1: $\, p=\infty$ and $0<q\le1$;
\item[\rm Case] 3.2: $\, p=\infty$ and $1<q<\infty$.
\end{enumerate}

Assume first $0<p\le1$. Then
    \begin{equation*}
    \begin{split}
  	|F(z)|^p\le
  	&\sum_{j,l}|\lambda_{j,l}|^p\frac{(1-|z_{j,l}|)^{pM-1}\omg(z_{j,l})^{-\frac{p}{q}}}{\left|1-\overline{z_{j,l}}z\right|^{pM}},\quad z\in\D.
    \end{split}
    \end{equation*}
By using $pM>1$, which follows from the hypothesis \eqref{eq:M}, and Lemma~\ref{Lemma:replacement-Lemmas-Memoirs}(ii)
we obtain
    \begin{equation}
    \begin{split}\label{eq:Fpmenor1}
  	M^p_p(r,F)
  	&\lesssim\sum_{j,l}|\lambda_{j,l}|^p\frac{(1-|z_{j,l}|)^{pM-1}\omg(z_{j,l})^{-\frac{p}{q}}}{\left(1-|z_{j,l}|r\right)^{pM-1}}
  	\asymp\sum_{j=0}^\infty c_j^p\frac{(1-r_j)^{pM-1}}{\left(1-r_j r\right)^{pM-1}\omg(r_j)^{\frac{p}{q}}}.
    \end{split}
    \end{equation}

Case 1.1: ${0<q \leq p\le 1}$. Observe first that $M> \frac{\g}{q}+\frac{1}{p}$ by the hypothesis \eqref{eq:M}, and hence $qM-\frac{q}{p}>\gamma$.  By using \eqref{eq:Fpmenor1} and Lemma~\ref{Lemma:replacement-Lemmas-Memoirs}(iii) we deduce
	 \begin{equation*}
    \begin{split}
  	\|F\|_{A^{p,q}_\omega}^q
  	&\lesssim\int_0^1\left(\sum_{j=0}^\infty c_j^p\frac{(1-r_j)^{pM-1}}{\left(1-r_j r\right)^{pM-1}\omg(r_j)^{\frac{p}{q}}} \right)^\frac{q}{p}   \omega(r)\,dr\\
  	&\lesssim\sum_{j=0}^\infty c_j^q \frac{(1-r_j)^{qM-\frac{q}{p}}}{\omg(r_j)} \int_0^1 \frac{\omega(r)}{\left(1-r_j r\right)^{qM-\frac{q}{p}}}\,dr\\
  	&\asymp\sum_{j=0}^\infty c_j^q +\sum_{j}c_j^q \frac{(1-r_j)^{qM-\frac{q}{p}}}{\omg(r_j)}
    \int_0^{r_j} \frac{\omega(r)}{\left(1-r_j r\right)^{qM-\frac{q}{p}}}\,dr
    \asymp\sum_{j=0}^\infty c_j^q,
    \end{split}
    \end{equation*}
and thus \eqref{eq:Fnormcontrol} is satisfied.

Case 1.2: ${0<p\le1}$ and $p<q$. By \eqref{eq:Fpmenor1} and standard estimates
     \begin{equation*}
    \begin{split}
    %\label{eq:case12}
  	\|F\|_{A^{p,q}_\omega}^q
  	&\lesssim \int_0^1\left(\sum_{j=0}^\infty c_j^p\frac{(1-r_j)^{pM-1}}{\left(1-r_jr\right)^{pM-1}\omg(r_j)^{\frac{p}{q}}}\right)^\frac{q}{p} \omega(r)\,dr\\
  	&=\sum_{k=0}^\infty \int_{r_k}^{r_{k+1}}
    \left(\sum_{j=0}^\infty c_j^p\frac{(1-r_j)^{pM-1}}{\left(1-r_jr\right)^{pM-1}\omg(r_j)^{\frac{p}{q}}}\right)^\frac{q}{p} \omega(r)\,dr\\
  	&\asymp\sum_{k=0}^\infty \int_{r_k}^{r_{k+1}}
    \left(\sum_{j=0}^\infty c_j^p\frac{(1-r_j)^{pM-1}}{\left(1-r_jr_k\right)^{pM-1}\omg(r_j)^{\frac{p}{q}}}\right)^\frac{q}{p} \omega(r)\,dr\\
  	&\le\sum_{k=0}^\infty\left(\sum_{j=0}^\infty c_j^p\frac{(1-r_j)^{pM-1}}{\left(1-r_jr_k\right)^{pM-1}\omg(r_j)^{\frac{p}{q}}}\right)^\frac{q}{p}\omg(r_k)\\
    &\lesssim\sum_{k=0}^\infty\left(\sum_{j=k+1}^\infty c_j^p\frac{(1-r_j)^{pM-1}}{\left(1-r_jr_k\right)^{pM-1}\omg(r_j)^{\frac{p}{q}}}\right)^\frac{q}{p}\omg(r_k)\\
    &\quad+\sum_{k=0}^\infty\left(\sum_{j=0}^k c_j^p\frac{(1-r_j)^{pM-1}}{\left(1-r_jr_k\right)^{pM-1}\omg(r_j)^{\frac{p}{q}}}\right)^\frac{q}{p}\omg(r_k)\\
    &\le\sum_{k=0}^\infty\left(\sum_{j=k+1}^\infty c_j^p\frac{(1-r_j)^{pM-1}}{\left(1-r_jr_k\right)^{pM-1}
    \omg(r_j)^{\frac{p}{q}}}\right)^\frac{q}{p}\omg(r_k)\\
  	&\quad+\sum_{k=0}^\infty\left(\sum_{j=0}^k \frac{c_j^p}{\omg(r_j)^{\frac{p}{q}}}\right)^\frac{q}{p}\omg(r_k)=S_1(F)+S_2(F). 	
    \end{split}
    \end{equation*}
To prove the estimate $S_l(F)\lesssim\left\|\lambda\right\|^q_{\ell^{p,q}}$ for $l=1,2$ we will use the characterization, obtained by Muckenhoupt~\cite{Muck}, of the weights $U$ and $V$ such that the Hardy operators $\int_0^x f(t)\,dt$ and $\int_x^\infty f(t)\,dt$ are bounded from $L^s(U^s, (0,\infty))$ to $L^s(V^s, (0,\infty))$, where $s=\frac{q}{p}>1$. To do this, consider first the step functions
    \begin{equation*}
    \begin{split}
    U(x)&=\frac{\omg(r_k)^\frac{p}{q}}{(1-r_k)^{pM-1}},\quad x\in [k,k+1),\quad k\in\N\cup\{0\};\\
    f(x)&=c_k^p\frac{(1-r_k)^{pM-1}}{\omg(r_k)^{\frac{p}{q}}},\quad x\in [k,k+1),\quad k\in\N\cup\{0\};\\
    V(x)&=\frac{\omg(r_k)^\frac{p}{q}}{(1-r_k)^{pM-1}},\quad x\in [k,k+1),\quad k\in\N\cup\{0\}.
    \end{split}
    \end{equation*}
With this notation
	\begin{equation*}
    \begin{split}
 	\left\|\lambda\right\|^q_{\ell^{p,q}}=\sum_{k=0}^\infty c_k^q
 	=\int_0^\infty \left(V(x)f(x) \right)^\frac{q}{p}dx
    \end{split}
    \end{equation*}
and
    \begin{equation*}
    \begin{split}
    \int_0^\infty \left(U(x) \int_x^\infty f(y)\,dy\right)^\frac{q}{p} dx
 	\ge\sum_{k=0}^\infty\left(\sum_{j=k+1}^\infty c_j^p\frac{(1-r_j)^{pM-1}}{\left(1-r_jr_k\right)^{pM-1}
    \omg(r_j)^{\frac{p}{q}}}\right)^\frac{q}{p}\omg(r_k)=S_1(F).
    \end{split}
    \end{equation*}
Therefore the estimate $S_1(F)\lesssim\left\|\lambda\right\|^q_{\ell^{p,q}}$ follows by \cite[Theorem~2]{Muck} once it is shown that
    \begin{equation}
    \begin{split}\label{eq:H1}
    \sup_{x\ge0}\left(\int_0^xU(y)^\frac{q}{p}\,dy\right)^\frac{p}{q}
    \left(\int_x^\infty V(y)^{-\left(\frac{q}{p}\right)'}\,dy\right)^{\frac{1}{\left(\frac{q}{p}\right)'}}<\infty.
    \end{split}
    \end{equation}
To see this, let $x\ge0$, and take $N=N(x)\in\N\cup\{0\}$ such that $N\le x<N+1$.
Then Lemma~\ref{Lemma:replacement-Lemmas-Memoirs}(ii) and the inequality $M>\frac{1}{p}+\frac{\b}{q}$, which follows by the hypothesis \eqref{eq:M}, imply
    \begin{equation}
    \begin{split}\label{eq:2}
 	\left(\int_0^xU(y)^\frac{q}{p}\,dy\right)^\frac{p}{q}
    &\le\left(\sum_{k=0}^N\int_k^{k+1}U(y)^\frac{q}{p}\,dy\right)^\frac{p}{q}
    =\left(\sum_{k=0}^{N}U(k)^\frac{q}{p} \right)^\frac{p}{q}\\
 	%=\left(\sum_{k=0}^{N} \left(\frac{\omg(r_k)^\frac{p}{q}}{(1-r_k)^{pM-1}}\right)^\frac{q}{p} \right)^\frac{p}{q}
 	&=\left(\sum_{k=0}^{N} \frac{\omg(r_k)}{(1-r_k)^{qM-\frac{q}{p}}}\right)^\frac{p}{q}\\
 	&\lesssim\left(\frac{\omg(r_N)}{(1-r_N)^{\b}}\sum_{k=0}^{N} \frac{1}{(1-r_k)^{qM-\frac{q}{p}-\b}}\right)^\frac{p}{q}\\
 	&\asymp \left(\frac{\omg(r_N)}{(1-r_N)^{qM-\frac{q}{p}}}\right)^\frac{p}{q}=\frac{\omg(r_N)^\frac{p}{q}}{(1-r_N)^{pM-1}}.
    \end{split}
    \end{equation}
Another application of Lemma~\ref{Lemma:replacement-Lemmas-Memoirs}(ii) and $M>\frac{1}{p}+\frac{\b}{q}$ give
    \begin{equation*}
    \begin{split}
 	\left(\int_x^\infty V(y)^{-\left(\frac{q}{p}\right)'}\,dy\right)^{\frac{1}{\left(\frac{q}{p}\right)'}}
    &\le\left(\sum_{k=N}^\infty V(k)^{-\left(\frac{q}{q-p}\right)}\right)^{\frac{q-p}{q}}
 	=\left( \sum_{k=N}^{\infty} \left(\frac{\omg(r_k)^\frac{p}{q}}{(1-r_k)^{pM-1}}\right)^{-\frac{q}{q-p}} \right)^\frac{q-p}{q}\\
 	&\lesssim\left(\frac{(1-r_N)^\frac{p\b}{q-p}}{\omg(r_N)^\frac{p}{q-p}}\sum_{k=N}^{\infty}
    \frac{1}{(1-r_k)^{\frac{p}{q-p}\left(\b-q\left(M-\frac{1}{p}\right)\right)}} \right)^\frac{q-p}{q}
 	\asymp\frac{(1-r_N)^{pM-1}}{\omg(r_N)^{\frac{p}{q}}},
    \end{split}
    \end{equation*}
which together with \eqref{eq:2} gives \eqref{eq:H1}.

We next establish $S_2(F)\lesssim\left\|\lambda\right\|^q_{\ell^{p,q}}$. Define
    \begin{equation*}
    \begin{split}
    U(x)&=\omg(r_k)^\frac{p}{q},\quad x\in [k,k+1),\quad k\in\N\cup\{0\};\\
    f(x)&=\frac{c_k^p}{\omg(r_k)^\frac{p}{q}},\quad x\in [k,k+1),\quad k\in\N\cup\{0\};\\
    V(x)&=\omg(r_k)^\frac{p}{q},\quad x\in [k,k+1),\quad k\in\N\cup\{0\}.
    \end{split}
    \end{equation*}
Then
	\begin{equation*}
    \begin{split}
 	\left\|\lambda\right\|^q_{\ell^{p,q}}
    =\sum_{k=0}^\infty c_k^q
 	=\int_0^\infty\left(V(x)f(x)\right)^\frac{q}{p}\,dx
    \end{split}
    \end{equation*}
and
    \begin{equation*}
    \begin{split}
    \int_0^\infty \left(U(x) \int_0^x f(y)\,dy\right)^\frac{q}{p} dx
 	\ge\sum_{k=0}^\infty \left(\sum_{j=0}^{k-1}\frac{c_j^p}{\omg(r_j)^{\frac{p}{q}}}\right)^\frac{q}{p}\omg(r_k),
    \end{split}
    \end{equation*}
and thus
    \begin{equation*}
    \begin{split}
    S_2(F)&\lesssim\sum_{k=0}^\infty \left(\sum_{j=0}^{k-1}\frac{c_j^p}{\omg(r_j)^{\frac{p}{q}}}\right)^\frac{q}{p}\omg(r_k)+\sum_{k=0}^\infty c_k^q
    \le\int_0^\infty \left(U(x) \int_0^x f(y)\,dy\right)^\frac{q}{p} dx+\|\lambda\|_{\ell^{p,q}}^q.
    \end{split}
    \end{equation*}
Therefore the estimate $S_2(F)\lesssim \left\|\lambda\right\|^q_{\ell^{p,q}}$ we are after, follows by \cite[Theorem~1]{Muck} if
    \begin{equation}
    \begin{split}\label{eq:H2}
 	\sup_{x\ge 0} \left(\int_x^\infty U(y)^\frac{q}{p}\,dy \right)^\frac{p}{q}
    \left(\int_0^x V(y)^{-\left(\frac{q}{p}\right)'}dy\right)^{\frac{1}{\left(\frac{q}{p}\right)'}}
    <\infty.
    \end{split}
    \end{equation}
To prove this, let $x\ge 0$ and choose $N=N(x)\in\N\cup\{0\}$ such that $N\le x<N+1$. Then
\eqref{Lemma:weights-in-R} yields
    \begin{equation*}
    \begin{split}
    \left(\int_x^\infty U(y)^\frac{q}{p}dy \right)^\frac{p}{q}
    &\le\left(\sum_{k=N}^{\infty}U(k)^\frac{q}{p}\right)^\frac{p}{q}
 	=\left(\sum_{k=N}^{\infty} \omg(r_k)\right)^\frac{p}{q}
 	\lesssim \left(\sum_{k=N}^{\infty} \left(\frac{1-r_k}{1-r_N}\right)^{\a} \omg(r_N)\right)^\frac{p}{q}
 	\asymp\omg(r_N)^{\frac{p}{q}}
    \end{split}
    \end{equation*} 	
and
	\begin{equation*}
    \begin{split}
	\left(\int_0^x V(y)^{-\left(\frac{q}{p}\right)'}dy\right)^{\frac{1}{\left(\frac{q}{p}\right)'}}
    &\le\left(\sum_{k=0}^{N} V(k)^{-\left(\frac{q}{p}\right)'}\right)^{\frac{1}{\left(\frac{q}{p}\right)'}}
 	=\left(\sum_{k=0}^{N} \frac1{\omg(r_k)^{\frac{p}{q-p}}}\right)^{\frac{1}{\left(\frac{q}{p}\right)'}}\\
 	&\lesssim\left(\sum_{k=0}^{N} \left(\frac{1-r_{N}}{1-r_k}\right)^{\frac{p\alpha}{q-p}}\frac1{\omg(r_{N})^{\frac{p}{q-p}}} \right)^{\frac{q-p}{q}}
 	\asymp\frac1{\omg(r_N)^{\frac{p}{q}}},
    \end{split}
    \end{equation*} 	
from which \eqref{eq:H2} follows. Case 1.2 is now proved.

Assume next $1<p<\infty$. Before dealing with Cases 2.1 and 2.2, we will estimate $M_p(r,F)$. We claim that there exist $\eta,\theta\in(0,1)$ such that
    \begin{equation}
    \begin{split}\label{eq:parametros}
                   & M(1-\theta)p'>1, \\
                     & M(\eta-\theta)+\frac{1-\eta}{p}>0,\\
                     & p'(1-\eta)(M-\frac{1}{p}-\frac{\b}{q})>1,\\
                     & pM\theta>1,\\
                     & q\eta (M-\frac{1}{p})>\gamma,\\
                     & M(\eta-\theta)+\frac{1-\eta}{p}<\frac{\a \eta}{q},
    \end{split}
    \end{equation}
where $\alpha$ is that of %Lemma~
\eqref{Lemma:weights-in-R} and $M,\b,\g$ are those in the statement of the theorem.
We postpone the proof of this fact for a moment and estimate $M_p(r,F)$ first. By H\"{o}lder's inequality,
    \begin{equation}
    \begin{split}\label{eq:F}
  	|F(z)|^p
    &\le\sum_{j,l}|\lambda_{j,l}|^p
    \frac{(1-|z_{j,l}|)^{(pM-1)\eta}}{\left|1-\overline{z_{j,l}}z\right|^{pM\theta}\omg(z_{j,l})^{\frac{p\eta}{q}}}
    \left(\sum_{j,l}\frac{(1-|z_{j,l}|)^{p'\left(M-\frac{1}{p}\right)(1-\eta)}}{\left|1-\overline{z_{j,l}}z\right|^{p'M(1-\theta)}\omg(z_{j,l})^{\frac{p'(1-\eta)}{q}}}\right)^\frac{p}{p'}
    \end{split}
    \end{equation}
for all $z\in\D$. Since $\{z_k\}$ is separated by the hypothesis, there exists $\delta=\delta(K)>0$ such that $\Delta(z_{j,l},\delta)\subset \{r_{j-1}\le|w|<r_{j+2}\}$ for all $l$ with the convenience that $r_{-1}=r_0=0$, and $\Delta(z_{j,l_1},\delta)\cap\Delta(z_{j,l_2},\delta)=\emptyset$ if $l_1\neq l_2$. Therefore by using the subharmonicity and the first case in \eqref{eq:parametros} we obtain
    \begin{equation}\label{2}
    \begin{split}
    \sum_{l}\frac{1}{\left|1-\overline{z_{j,l}}z\right|^{p'M(1-\theta)}}
    &\lesssim K^{2j} \sum_{l} \int_{\Delta(z_{j,l},\delta)}\frac{dA(w)}{\left|1-\overline{w}z\right|^{p'M(1-\theta)}}\\
    &\le K^{2j}\int_{D(0,r_{j+2})\setminus D(0,r_{j-1})}\frac{dA(w)}{\left|1-\overline{w}z\right|^{p'M(1-\theta)}}\\
    &\lesssim   \frac{K^{j}}{\left(1-r_{j+2}|z|\right)^{p'M(1-\theta)-1}}
    \asymp \frac{ K^{j} }{\left(1-r_j|z|\right)^{p'M(1-\theta)-1}},
    \end{split}
    \end{equation}
and hence
    \begin{equation}
    \begin{split}\label{eq:3}
    \sum_{j,l}\frac{(1-|z_{j,l}|)^{p'\left(M-\frac{1}{p}\right)(1-\eta)}}{\left|1-\overline{z_{j,l}}z\right|^{p'M(1-\theta)}\omg(z_{j,l})^{\frac{p'(1-\eta)}{q}}}
    &\lesssim\sum_j\frac{(1-r_j)^{p'\left(M-\frac{1}{p}\right)(1-\eta)-1}}{\left(1-r_j|z|\right)^{p'M(1-\theta)-1}\omg(r_j)^{\frac{p'(1-\eta)}{q}}}\\
    &\asymp\sum_{r_j\le|z|}\frac{(1-r_j)^{p'\left(M-\frac{1}{p}\right)(1-\eta)-1}}{\left(1-r_j|z|\right)^{p'M(1-\theta)-1}\omg(r_j)^{\frac{p'(1-\eta)}{q}}}\\
    &\quad+\sum_{r_j>|z|}\frac{(1-r_j)^{p'\left(M-\frac{1}{p}\right)(1-\eta)-1}}{\left(1-r_j|z|\right)^{p'M(1-\theta)-1}\omg(r_j)^{\frac{p'(1-\eta)}{q}}}\\
    &\le\frac1{\omg(z)^{\frac{p'(1-\eta)}{q}}}\sum_{r_j\leq |z|}(1-r_j)^{p'\left(M-\frac{1}{p}\right)(1-\eta)-p'M(1-\theta)}\\
    &\quad+\frac{1}{\left(1-|z|\right)^{p'M(1-\theta)-1}}\sum_{r_j>|z|}\frac{(1-r_j)^{p'\left(M-\frac{1}{p}\right)(1-\eta)-1}}{\omg(r_j)^{\frac{p'(1-\eta)}{q}}}\\
    &=S_3(F)+S_4(F).
    \end{split}
    \end{equation}
Since $M(\eta-\theta)+\frac{1-\eta}{p}>0$ by the second case in \eqref{eq:parametros},
    \begin{equation}
    \begin{split}\label{eq:4}
    S_3(F)&
    \le\frac1{\omg(z)^{\frac{p'(1-\eta)}{q}}}\sum_{r_j\le|z|}\frac1{(1-r_j)^{p'\left(M(\eta-\theta)+\frac{1-\eta}{p}\right)}}
    \asymp\frac{1}{\omg(z)^{\frac{p'(1-\eta)}{q}}(1-|z|)^{p'\left(M(\eta-\theta)+\frac{1-\eta}{p}\right)}}.
    \end{split}
    \end{equation}
Now, by using Lemma~\ref{Lemma:replacement-Lemmas-Memoirs}(ii) and the third case of \eqref{eq:parametros}, we deduce
    \begin{equation}
    \begin{split}\label{eq:5}
    S_4(F)
    &\lesssim \frac{(1-|z|)^{\frac{\b p'(1-\eta)}{q}-(p'M(1-\theta)-1)}}{\omg(z)^{\frac{p'(1-\eta)}{q}}} \sum_{r_j> |z|}(1-r_j)^{p'\left(M-\frac{1}{p}\right)(1-\eta)-1-\frac{\b p'(1-\eta)}{q}}\\
    &\asymp\frac{1}{\omg(z)^{\frac{p'(1-\eta)}{q}}(1-|z|)^{p'\left(M(\eta-\theta)+\frac{1-\eta}{p}\right)}}.
    \end{split}
    \end{equation}
Consequently, by combining the estimates \eqref{eq:F}--\eqref{eq:5} we deduce
    \begin{equation*}
    \begin{split}
  	|F(z)|^p
  	&\lesssim\frac{\omg(z)^{-\frac{p(1-\eta)}{q}}}{(1-|z|)^{p\left(M(\eta-\theta)+\frac{1-\eta}{p}\right)}}
  	\sum_{j,l}|\lambda_{j,l}|^p\frac{(1-|z_{j,l}|)^{(pM-1)\eta}}{\left|1-\overline{z_{j,l}}z\right|^{pM\theta}\omg(z_{j,l})^{\frac{p\eta}{q}}},\quad z\in\D,
    \end{split}
    \end{equation*}
and hence the fourth case of \eqref{eq:parametros} gives
    \begin{equation}
    \begin{split}\label{eq:MprF}
  	M^p_p(r,F)
  	&\lesssim\frac{\omg(r)^{-\frac{p(1-\eta)}{q}}}{(1-r)^{p\left(M(\eta-\theta)+\frac{1-\eta}{p}\right)}}
    \sum_{j,l}|\lambda_{j,l}|^p\frac{(1-r_j)^{(pM-1)\eta}}{\omg(r_j)^{\frac{p\eta}{q}}}
    \int_0^{2\pi}\frac{dt}{\left|1-\overline{z_{j,l}}re^{it}\right|^{pM\theta}}\\
  	&\lesssim\frac{\omg(r)^{-\frac{p(1-\eta)}{q}}}{(1-r)^{p\left(M(\eta-\theta)+\frac{1-\eta}{p}\right)}}
  	\sum_{j}c_j^p\frac{(1-r_j)^{(pM-1)\eta}}{\left(1-r_j r\right)^{pM\theta-1}\omg(r_j)^{\frac{p\eta}{q}}}.
    \end{split}
    \end{equation}
By using this estimate we will deal with Cases 2.1 and 2.2.

Case 2.1: $p>1$ and $0<q \le p$. By \eqref{eq:MprF},	
	\begin{equation}
    \begin{split}\label{eq:6}
  	&\|F\|_{A^{p,q}_\omega}^q
  	\lesssim \int_0^1 \frac{\omg(r)^{-(1-\eta)}}{(1-r)^{q\left(M(\eta-\theta)+\frac{1-\eta}{p}\right)}}
  	\sum_{j}c_j^q\frac{(1-r_j)^{q\left(M-\frac{1}{p}\right)\eta}\omg(r_j)^{-\eta}}{\left(1-r_j r\right)^{qM\theta-\frac{q}{p}}}\omega(r)\,dr \\
  	&= \sum_{j}c_j^q (1-r_j)^{q\left(M-\frac{1}{p}\right)\eta}\omg(r_j)^{-\eta}
  	\int_0^1\frac{\omg(r)^{-(1-\eta)}}{\left(1-r_j r\right)^{qM\theta-\frac{q}{p}}(1-r)^{q\left(M(\eta-\theta)+\frac{1-\eta}{p}\right)}}\omega(r)\,dr.
    \end{split}
    \end{equation}
Lemma~\ref{Lemma:replacement-Lemmas-Memoirs}(iii) together with the fifth case of \eqref{eq:parametros} yields
    \begin{equation}
    \begin{split}\label{eq:7}
  	\int_0^{r_j}\frac{\omg(r)^{-(1-\eta)}\omega(r)}{\left(1-r_j r\right)^{qM\theta-\frac{q}{p}}(1-r)^{q\left(M(\eta-\theta)+\frac{1-\eta}{p}\right)}}\,dr
  	&\le\int_0^{r_j}\frac{\omg(r)^{-(1-\eta)}\omega(r)}{(1-r)^{q\left(M(\eta-\theta)+\frac{1-\eta}{p}\right)+qM\theta-\frac{q}{p}}}\,dr\\
    &\le\frac{1}{\widehat{\om}(r_j)^{1-\eta}}\int_0^{r_j}\frac{\omega(r)}{(1-r)^{q\eta\left(M-\frac{1}{p}\right)}}\,dr\\
  	&\lesssim\frac{\omg(r_j)^\eta}{\left(1-r_j \right)^{q\eta\left(M-\frac{1}{p}\right)}},
    \end{split}
    \end{equation}
while the fourth case of \eqref{eq:parametros} implies
     \begin{equation*}
    \begin{split}
  	\int_{r_j}^1\frac{\omg(r)^{-(1-\eta)}\omega(r)}{\left(1-r_j r\right)^{qM\theta-\frac{q}{p}}(1-r)^{q\left(M(\eta-\theta)+\frac{1-\eta}{p}\right)}}\,dr
  	\le\frac{1}{(1-r_j)^{Mq\theta -\frac{q}{p}}} \int_{r_j}^1 \frac{\omg(r)^{-(1-\eta)}\omega(r)}{(1-r)^{q\left(M(\eta-\theta)+\frac{1-\eta}{p}\right)}}\,dr,
    \end{split}
    \end{equation*}
where, by an integration by parts and %Lemma~
\eqref{Lemma:weights-in-R} together with the sixth case of \eqref{eq:parametros},
    \begin{equation*}
    \begin{split}
    &\int_{r_j}^1 \frac{\omg(r)^{-(1-\eta)}}{(1-r)^{q\left(M(\eta-\theta)+\frac{1-\eta}{p}\right)}}\omega(r)\,dr\\
    &=\frac{\omg(r_j)^\eta}{\eta(1-r_j)^{q\left(M(\eta-\theta)+\frac{1-\eta}{p}\right)}}
    +\frac{1}{\eta q(M(\eta-\theta)+\frac{1-\eta}{p})}
    \int_{r_j}^1\frac{\omg(r)^{\eta}}{(1-r)^{q\left(M(\eta-\theta)+\frac{1-\eta}{p}\right)+1}}\,dr\\
    &\lesssim\frac{\omg(r_j)^\eta}{(1-r_j)^{q\left(M(\eta-\theta)+\frac{1-\eta}{p}\right)}}+\frac{\omg(r_j)^\eta}{(1-r_j)^{\a\eta}}
    \int_{r_j}^1\frac{dr}{(1-r)^{q\left(M(\eta-\theta)+\frac{1-\eta}{p}\right)+1-\a\eta}}\\
    &\asymp\frac{\omg(r_j)^\eta}{(1-r_j)^{q\left(M(\eta-\theta)+\frac{1-\eta}{p}\right)}},
    \end{split}
    \end{equation*}
and thus
     \begin{equation*}
    \begin{split}
  	&\int_{r_j}^1\frac{\omg(r)^{-(1-\eta)}\omega(r)}
    {\left(1-r_j r\right)^{qM\theta-\frac{q}{p}}(1-r)^{q\left(M(\eta-\theta)+\frac{1-\eta}{p}\right)}}\,dr
  	\lesssim\frac{\omg(r_j)^\eta}{\left(1-r_j \right)^{q\eta\left(M-\frac{1}{p}\right)}}.
    \end{split}
    \end{equation*}
This together with \eqref{eq:6} and \eqref{eq:7} gives \eqref{eq:Fnormcontrol}.

Case 2.2: $p>1$ and $0<p<q$. By \eqref{eq:MprF} and the fourth case of \eqref{eq:parametros},
     \begin{equation*}
    \begin{split}
    %\label{eq:11}
  	&\|F\|_{A^{p,q}_\omega}^q
  	\lesssim\int_0^1\left(\frac{\omg(r)^{-\frac{p(1-\eta)}{q}}}{(1-r)^{p\left(M(\eta-\theta)+\frac{1-\eta}{p}\right)}}
  	\sum_{j=0}^\infty c_j^p\frac{(1-r_j)^{(pM-1)\eta}\omg(r_j)^{-\frac{p\eta}{q}}}{\left(1-r_j r\right)^{pM\theta-1}} \right)^\frac{q}{p}\omega(r)\,dr\\
  	&=\sum_{k=0}^\infty\int_{r_k}^{r_{k+1}} \left(\frac{\omg(r)^{-\frac{p(1-\eta)}{q}}}{(1-r)^{p\left(M(\eta-\theta)+\frac{1-\eta}{p}\right)}}
  	\sum_{j=0}^\infty c_j^p\frac{(1-r_j)^{(pM-1)\eta}\omg(r_j)^{-\frac{p\eta}{q}}}{\left(1-r_j r\right)^{pM\theta-1}}\right)^\frac{q}{p}\omega(r)\,dr\\
  	&\lesssim\sum_{k=0}^\infty\left(\frac{\omg(r_k)^{-\frac{p(1-\eta)}{q}}}{(1-r_k)^{p\left(M(\eta-\theta)+\frac{1-\eta}{p}\right)}}
  	\sum_{j=0}^\infty c_j^p\frac{(1-r_j)^{(pM-1)\eta}\omg(r_j)^{-\frac{p\eta}{q}}}
    {\left(1-r_j r_k\right)^{pM\theta-1}}\right)^\frac{q}{p}(\omg(r_k)-\omg(r_{k+1}))\\
    &\le\sum_{k=0}^\infty\left(\frac{\omg(r_k)^{\frac{p\eta}{q}}}{(1-r_k)^{p\left(M(\eta-\theta)+\frac{1-\eta}{p}\right)}}
  	\sum_{j=0}^\infty c_j^p\frac{(1-r_j)^{(pM-1)\eta}}{\left(1-r_j r_k\right)^{pM\theta-1}\omg(r_j)^{\frac{p\eta}{q}}}\right)^\frac{q}{p}\\
  	&\lesssim\sum_{k=0}^\infty\left(\frac{\omg(r_k)^{\frac{p\eta}{q}}}{(1-r_k)^{p\left(M(\eta-\theta)+\frac{1-\eta}{p}\right)}}
  	\sum_{j=k+1}^\infty c_j^p\frac{(1-r_j)^{(pM-1)\eta}}{\left(1-r_j r_k\right)^{pM\theta-1}\omg(r_j)^{\frac{p\eta}{q}}}\right)^\frac{q}{p}\\
  	&\quad+ \sum_{k=0}^\infty\left(\frac{\omg(r_k)^{\frac{p\eta}{q}}}{(1-r_k)^{p\left(M(\eta-\theta)+\frac{1-\eta}{p}\right)}}
  	\sum_{j=0}^k c_j^p\frac{(1-r_j)^{(pM-1)\eta}}{\left(1-r_j r_k\right)^{pM\theta-1}\omg(r_j)^{\frac{p\eta}{q}}}\right)^\frac{q}{p}\\
    &\lesssim \sum_{k=0}^\infty \left(\frac{\omg(r_k)^{\frac{p\eta}{q}}}{(1-r_k)^{\eta(pM-1)}}
    \sum_{j=k+1}^\infty c_j^p\frac{(1-r_j)^{(pM-1)\eta}}{\omg(r_j)^{\frac{p\eta}{q}}}\right)^\frac{q}{p} \\
  	&\quad+\sum_{k=0}^\infty\left(\frac{\omg(r_k)^{\frac{p\eta}{q}}}{(1-r_k)^{p\left(M(\eta-\theta)+\frac{1-\eta}{p}\right)}}
  	\sum_{j=0}^k c_j^p\frac{(1-r_j)^{pM(\eta-\theta)+(1-\eta)}}{\omg(r_j)^{\frac{p\eta}{q}}}\right)^\frac{q}{p}
    =S_5(F)+S_6(F).
    \end{split}
    \end{equation*}
To prove the estimate $S_5(F)\lesssim\|\lambda\|^q_{\ell^{p,q}}$, define the step functions
    \begin{equation*}
    \begin{split}
    U(x)&=\frac{\omg(r_k)^\frac{p\eta}{q}}{(1-r_k)^{\eta(pM-1)}},\quad x\in [k,k+1),\quad k\in\N\cup\{0\};\\
    f(x)&=c_k^p\frac{(1-r_k)^{\eta(pM-1)}}{\omg(r_k)^{\frac{p\eta}{q}}},\quad x\in [k,k+1),\quad k\in\N\cup\{0\};\\
    V(x)&=\frac{\omg(r_k)^\frac{p\eta}{q}}{(1-r_k)^{\eta(pM-1)}},\quad x\in [k,k+1),\quad k\in\N\cup\{0\}.
    \end{split}
    \end{equation*}
Then
	\begin{equation*}
    \begin{split}
 	\left\|\lambda\right\|^q_{\ell^{p,q}}=\sum_k^\infty c_k^q
 	&=\int_0^\infty\left(V(x)f(x)\right)^\frac{q}{p}dx
    \end{split}
    \end{equation*}
and
    \begin{equation*}
    \begin{split}
    \int_0^\infty \left(U(x)\int_x^\infty f(y)\,dy\right)^\frac{q}{p} dx
    \ge \sum_{k=0}^\infty\left(\frac{\omg(r_k)^{\frac{p\eta}{q}}}{(1-r_k)^{\eta(pM-1)}}
    \sum_{j=k+1}^\infty c_j^p\frac{(1-r_j)^{\eta(pM-1)}}{\omg(r_j)^{\frac{p\eta}{q}}}\right)^\frac{q}{p}=S_5(F).
    \end{split}
    \end{equation*}
Therefore $S_5(F)\lesssim \left\|\lambda\right\|^q_{\ell^{p,q}}$ follows by \cite[Theorem~2]{Muck} if
    \begin{equation}
    \begin{split}\label{eq:H3}
 	\sup_{x\ge0}\left(\int_0^x U(y)^\frac{q}{p}dy \right)^\frac{p}{q}  \left(\int_x^\infty
 V(y)^{-\left(\frac{q}{p}\right)'}dy\right)^{\frac{1}{\left(\frac{q}{p}\right)'}}  <\infty.
    \end{split}
    \end{equation}
To prove this, let $x\ge 0$ and take $N=N(x)\in\N\cup\{0\}$ such that $N\le x<N+1$.
Then Lemma~\ref{Lemma:replacement-Lemmas-Memoirs}(ii) and the hypothesis \eqref{eq:M} yield
    \begin{equation}
    \begin{split}\label{eq:8}
 	\left(\int_0^xU(y)^\frac{q}{p}\,dy\right)^\frac{p}{q}
    &\le\left( \sum_{k=0}^{N+1} |U(k)|^\frac{q}{p} \right)^\frac{p}{q}
 	=\left(\sum_{k=0}^{N} \left(\frac{\omg(r_k)^\frac{p\eta}{q}}{(1-r_k)^{\eta(pM-1)}}\right)^\frac{q}{p} \right)^\frac{p}{q}\\
 	&=\left(\sum_{k=0}^{N}\frac{\omg(r_k)^\eta}{(1-r_k)^{\eta\left(qM-\frac{q}{p}\right)}}\right)^\frac{p}{q}
 	\lesssim \left(\frac{\omg(r_N)^\eta}{(1-r_N)^{\b\eta}}\sum_{k=0}^{N}\frac{1}{(1-r_k)^{\eta\left(qM-\frac{q}{p}-\beta\right)}}\right)^\frac{p}{q}\\
 	&\asymp \left(\frac{\omg(r_N)^\eta}{(1-r_N)^{\eta\left(qM-\frac{q}{p}\right)}}\right)^\frac{p}{q}
    =\frac{\omg(r_N)^\frac{p\eta}{q}}{(1-r_N)^{\eta(pM-1)}}.
    \end{split}
    \end{equation}
Another application of Lemma~\ref{Lemma:replacement-Lemmas-Memoirs}(ii) and the hypothesis \eqref{eq:M} give
    \begin{equation*}
    \begin{split}
 	\left(\int_x^\infty V(y)^{-\left(\frac{q}{p}\right)'}dy\right)^{\frac{1}{\left(\frac{q}{p}\right)'}}
    %&\le\left( \sum_{k=N}^{\infty} |V(k)|^{-\frac{q}{q-p}}  \right)^\frac{q-p}{q}
 	&\le\left( \sum_{k=N}^{\infty} \left(\frac{\omg(r_k)^\frac{p\eta}{q}}{(1-r_k)^{\eta(pM-1)}}\right)^{-\frac{q}{q-p}} \right)^\frac{q-p}{q}\\
 	&\lesssim \left(\frac{(1-r_N)^\frac{p\b\eta}{q-p}}{\omg(r_N)^\frac{p\eta}{q-p}}\sum_{k=N}^{\infty} \frac{1}{(1-r_k)^{\frac{p\eta}{q-p}(\b-q(M-\frac{1}{p}))}} \right)^\frac{q-p}{q}\\
 	& \asymp \frac{(1-r_N)^{\eta(pM-1)}}{\omg(r_N)^\frac{p\eta}{q}},
    \end{split}
    \end{equation*}
which together with \eqref{eq:8} implies \eqref{eq:H3}.

We next prove $S_6(F)\lesssim \left\|\lambda\right\|^q_{\ell^{p,q}}$. Define
    \begin{equation*}
    \begin{split}
    U(x)&=\frac{\omg(r_k)^{\frac{p\eta}{q}}}{(1-r_k)^{p(M(\eta-\theta)+\frac{1-\eta}{p})}},\quad x\in [k,k+1),\quad k\in\N\cup\{0\};\\ f(x)&=c_k^p\frac{(1-r_k)^{pM(\eta-\theta) +1-\eta}}{\omg(r_k)^{\frac{p\eta}{q}}},\quad x\in [k,k+1),\quad k\in\N\cup\{0\};\\
    V(x)&=\frac{\omg(r_k)^\frac{p\eta}{q}}{(1-r_k)^{pM(\eta-\theta)+1-\eta}},\quad x\in [k,k+1),\quad k\in\N\cup\{0\}.
    \end{split}
    \end{equation*}
Then
	\begin{equation*}
    \begin{split}
 	\left\|\lambda\right\|^q_{\ell^{p,q}}=\sum_{k=0}^\infty c_k^q
 	=\int_0^\infty \left(V(x)f(x) \right)^\frac{q}{p}dx
    \end{split}
    \end{equation*}
and
    \begin{equation*}
    \begin{split}
    \int_0^\infty \left(U(x) \int_0^x f(y)\,dy\right)^\frac{q}{p} dx
    &\ge\sum_{k=0}^\infty\left(U(k)\sum_{j=0}^{k-1}f(j)\right)^\frac{q}{p}\\
    &=\sum_{k=0}^\infty\left(\frac{\omg(r_k)^{\frac{p\eta}{q}}}{(1-r_k)^{p\left(M(\eta-\theta)+\frac{1-\eta}{p}\right)}}
  	\sum_{j=0}^{k-1}c_j^p\frac{(1-r_j)^{pM(\eta-\theta)+(1-\eta)}}{\omg(r_j)^{\frac{p\eta}{q}}}\right)^\frac{q}{p},
    \end{split}
    \end{equation*}
and hence
    $$
    S_6(F)\lesssim\int_0^\infty \left(U(x) \int_0^x f(y)\,dy\right)^\frac{q}{p} dx+\left\|\lambda\right\|^q_{\ell^{p,q}}.
    $$
Therefore $S_6(F)\lesssim \left\|\lambda\right\|^q_{\ell^{p,q}}$ follows by \cite[Theorem~1]{Muck} once we have shown that
    \begin{equation}
    \begin{split}\label{eq:H4}
 	\sup_{x\ge 0} \left(\int_x^\infty U(y)^\frac{q}{p}dy \right)^\frac{p}{q}
    \left(\int_0^x V(y)^{-\left(\frac{q}{p}\right)'}dy\right)^{\frac{1}{\left(\frac{q}{p}\right)'}}<\infty.
    \end{split}
    \end{equation}
To see this, let $x\ge 0$ and choose $N=N(x)\in\N\cup\{0\}$ such that $N\le x<N+1$.
Then, by 
\eqref{Lemma:weights-in-R} and the sixth case of \eqref{eq:parametros} we deduce
    \begin{equation*}
    \begin{split}
 	\left(\int_x^\infty U(y)^\frac{q}{p}\,dy\right)^\frac{p}{q}
    %&\le\left(\sum_{k=N}^{\infty} |U(k)|^\frac{q}{p}\right)^\frac{p}{q}
 	&\le\left(\sum_{k=N}^{\infty} \frac{\omg(r_k)^{\eta}}{(1-r_k)^{q\left(M(\eta-\theta)+\frac{1-\eta}{p}\right)}}\right)^\frac{p}{q}\\
    &\lesssim\left(\frac{\omg(r_N)^{\eta}}{(1-r_N)^{\eta\a}}\sum_{k=N}^{\infty} \frac{1}{(1-r_k)^{q\left(M(\eta-\theta)+\frac{1-\eta}{p}\right)-\eta\a}} \right)^\frac{p}{q}\\
 	&\asymp \frac{\omg(r_N)^{\frac{p\eta}{q}}}{(1-r_N)^{p\left(M(\eta-\theta)+\frac{1-\eta}{p}\right)}}
    \end{split}
    \end{equation*} 	
and
    \begin{equation*}
    \begin{split}
	\left(\int_0^x V(y)^{-\left(\frac{q}{p}\right)'}dy\right)^{\frac{1}{\left(\frac{q}{p}\right)'}}
 	&\le\left(\sum_{k=0}^{N}\left(\frac{(1-r_k)^{pM(\eta-\theta) +1-\eta}}{\omg(r_k)^{\frac{p\eta}{q}} }\right)^{\frac{q}{q-p}}\right)^\frac{q-p}{q}\\
    &\lesssim \left(\frac{(1-r_{N})^{\frac{p\a\eta}{q-p}}}{\omg(r_{N})^\frac{p\eta}{q-p}}
    \sum_{k=0}^{N}(1-r_k)^{\frac{q}{q-p}\left(pM(\eta-\theta)+1-\eta-\frac{p\eta\a}{q}\right)}\right)^{\frac{q-p}{q}}\\
    &\lesssim\frac{(1-r_{N})^{p\left(M(\eta-\theta)+\frac{1-\eta}{p}\right)}}{\omg(r_{N})^{\frac{p\eta}{q}}},
    \end{split}
    \end{equation*}
from which \eqref{eq:H4} follows. This finishes the proof of Case~2.2.

Finally, let us prove that there exist $\eta,\theta$ satisfying \eqref{eq:parametros}. By \eqref{eq:M}, the third and fifth cases in \eqref{eq:parametros} are equivalent to
    \begin{equation*}
    \eta\in\left(\frac{\gamma}{q\left(M-\frac{1}{p}\right)},1-\frac{1}{p'\left(M-\frac{1}{p}-\frac{\b}{q} \right)}\right),
    \end{equation*}
where the inequality $\frac{\gamma}{q\left(M-\frac{1}{p}\right)}
<1-\frac{1}{p'\left(M-\frac{1}{p}-\frac{\b}{q} \right) }$  follows from \eqref{eq:M}.
Let us observe that \eqref{eq:M} also implies $\frac{\gamma}{q\left(M-\frac{1}{p}\right)}<\frac{M-1}{M-\frac{1}{p}}$. Therefore we may choose an $\eta$ satisfying
    \begin{equation}\label{eq:par1}
    \eta\in\left(\frac{\gamma}{q\left(M-\frac{1}{p}\right)},\min\left\{1-\frac{1}{p'\left(M-\frac{1}{p}-\frac{\b}{q} \right)},\frac{M-1}{M-\frac{1}{p}} \right\}\right).
    \end{equation}
Next, observe that the first and the fourth cases in \eqref{eq:parametros} are equivalent to
\begin{equation*}
\theta\in\left( \frac{1}{Mp},1-\frac{1}{Mp'}\right),
\end{equation*}
where $\frac{1}{Mp}<1-\frac{1}{Mp'}$ by \eqref{eq:M}. Further, by \eqref{eq:M}, the second and the sixth conditions in  \eqref{eq:parametros} are equivalent to
\begin{equation}\label{eq:par3}
\theta\in\left( \frac{\left(M-\frac{1}{p}-\frac{\a}{q} \right)\eta+\frac{1}{p}}{M},\frac{\left(M-\frac{1}{p} \right)\eta+\frac{1}{p}}{M}\right),
\end{equation}
where trivially $\frac{\left(M-\frac{1}{p}-\frac{\a}{q} \right)\eta+\frac{1}{p}}{M}<\frac{\left(M-\frac{1}{p} \right)\eta+\frac{1}{p}}{M}$.
It is clear that $ \frac{1}{Mp}<\frac{\left(M-\frac{1}{p}-\frac{\a}{q} \right)\eta+\frac{1}{p}}{M}$ as $\alpha\le \beta$, and
$\frac{\left(M-\frac{1}{p} \right)\eta+\frac{1}{p}}{M}<1-\frac{1}{Mp'}$ by \eqref{eq:par1}. Therefore it is enough to choose $\theta$ satisfying \eqref{eq:par3}. This finishes the proof of \eqref{eq:parametros}.

Case 3.1: $p=\infty$ and $0<q\le1$. For this we set $\lambda(j)=\sup_l|\lambda_{j,l}|$. Then the estimates in \eqref{2} imply
    \begin{equation}\label{3}
    \begin{split}
  	|F(z)|\lesssim\sum_{j=0}^\infty\lambda(j)\frac{(1-r_j)^{M-1}}{\left(1-r_j|z|\right)^{M-1}\omg(r_j)^{\frac{1}{q}}},\quad z\in\D,
    \end{split}
    \end{equation}
for each $M>1$. Since $0<q\le1$ and $q(M-1)>\gamma$ by the hypothesis \eqref{eq:M}, Lemma~\ref{Lemma:replacement-Lemmas-Memoirs}(iii) yields
	\begin{equation*}
	 \begin{split}
  	\|F\|_{A^{\infty,q}_\om}^q%\int_0^1 M_\infty^q(r,f)\omega(r)dr
  	\lesssim \sum_{j=0}^\infty\lambda(j)^q\int_0^1\frac{(1-r_j)^{q(M-1)}}{\left(1-r_jr\right)^{q(M-1)}\omg(r_j)}\omega(r)\,dr
  	\lesssim \sum_{j=0}^\infty\lambda(j)^q=\|\lambda\|_{\ell^{\infty,q}}^q,
    \end{split}
    \end{equation*}
and thus this case is proved.

Case 3.2: $p=\infty$ and $1<q<\infty$. By using \eqref{3} we deduce
	\begin{equation*}
	\begin{split}
  	\|F\|_{A^{\infty,q}_\om}^q
    &\lesssim\int_0^1\left(\sum_{j=0}^\infty\lambda(j)
    \frac{(1-|z_{j,l}|)^{M-1}}{\left(1-r_jr\right)^{M-1}\omg(z_{j,l})^{\frac{1}{q}}}\right)^q\omega(r)\,dr\\
    &\lesssim\sum_{k=0}^\infty\left(\sum_{j=0}^\infty\lambda(j)
    \left(\frac{1-r_j}{1-r_jr_k}\right)^{M-1}\left(\frac{\omg(r_k)}{\omg(r_j)}\right)^{\frac{1}{q}}\right)^q\\
    &\lesssim\sum_{k=0}^\infty\left(\sum_{j=k+1}^\infty\lambda(j)
    \left(\frac{1-r_j}{1-r_k}\right)^{M-1}\left(\frac{\omg(r_k)}{\omg(r_j)}\right)^{\frac{1}{q}}\right)^q\\
    &\quad+\sum_{k=0}^\infty\left(\sum_{j=0}^k\lambda(j)\left(\frac{\omg(r_k)}{\omg(r_j)}\right)^{\frac{1}{q}}\right)^q
    =S_7(F) + S_8(F).
    \end{split}
    \end{equation*}
To prove $S_7(F)\lesssim \left\|\lambda\right\|^q_{\ell^{\infty,q}}$, define the step functions
    \begin{equation*}
    \begin{split}
    U(x) &= \frac{\omg(r_k)^\frac{1}{q}}{(1-r_k)^{M-1}},\quad x\in [k,k+1),\quad k\in\N\cup\{0\};\\
    f(x) & =\lambda(k)\frac{(1-r_k)^{M-1}}{\omg(r_k)^\frac{1}{q}},\quad x\in [k,k+1),\quad k\in\N\cup\{0\};\\
    V(x)& = \frac{\omg(r_k)^\frac{1}{q}}{(1-r_k)^{M-1}},\quad x\in [k,k+1),\quad k\in\N\cup\{0\}.
    \end{split}
    \end{equation*}
Then
	\begin{equation*}
    \begin{split}
 	\left\|\lambda\right\|^q_{\ell^{\infty,q}}=\sum_{k=0}^\infty\lambda(k)^q
 	=\int_0^\infty \left(V(x)f(x) \right)^q\, dx
    \end{split}
    \end{equation*}
and
    \begin{equation*}
    \begin{split}
    \int_0^\infty \left(U(x)\int_x^\infty f(y)\,dy\right)^q\,dx
 	\ge\sum_{k=0}^\infty\left(\sum_{j=k+1}^\infty\lambda(j)
    \left(\frac{1-r_j}{1-r_k}\right)^{M-1}\left(\frac{\omg(r_k)}{\omg(r_j)}\right)^{\frac{1}{q}}\right)^q =S_7(F).
    \end{split}
    \end{equation*}
Therefore $S_7(F)\lesssim \left\|\lambda\right\|^q_{\ell^{\infty,q}}$ follows by \cite[Theorem~2]{Muck}, if
    \begin{equation}
    \begin{split}\label{eq:H12}
 	\sup_{x\ge 0} \left(\int_0^x U(y)^q\,dy \right)^\frac{1}{q}
    \left(\int_x^\infty V(y)^{-q'}\,dy\right)^{\frac{1}{q'}}  <\infty.
    \end{split}
    \end{equation}
To prove this, let $x\ge 0$ and $N\in\N\cup\{0\}$ such that $N\le x<N+1$.
Then Lemma~\ref{Lemma:replacement-Lemmas-Memoirs}(ii) and the hypothesis \eqref{eq:M} imply
    \begin{equation*}
    \begin{split}
 	\int_0^x U(y)^q\,dy
    %&\le\sum_{k=0}^{N}U(k)^q
 	&\le\sum_{k=0}^{N}\frac{\omg(r_k)}{(1-r_k)^{q(M-1)}}\\
 	&\lesssim\frac{\omg(r_{N})}{(1-r_{N})^{\b}}\sum_{k=0}^{N} \frac{1}{(1-r_k)^{q(M-1)-\b}}
 	\asymp\frac{\omg(r_{N})}{(1-r_{N})^{q(M-1)}}
    \end{split}
    \end{equation*}
and
    \begin{equation*}
    \begin{split}
 	\int_x^\infty V(y)^{-q'}\,dy
    &\le\sum_{k=N}^{\infty}\frac{(1-r_k)^{q'(M-1)}}{\omg(r_k)^{\frac{q'}{q}}}
 	\lesssim\frac{(1-r_N)^{\frac{q'\b}{q}}}{\omg(r_N)^{\frac{q'}{q}}}\sum_{k=N}^{\infty} (1-r_k)^{q'\left(M-1-\frac{\b}{q}\right)}\\
 	&\asymp\frac{(1-r_N)^{q'(M-1)}}{\omg(r_N)^\frac{q'}{q}},
    \end{split}
    \end{equation*}
from which \eqref{eq:H12} follows. Thus $S_7(F)\lesssim \left\|\lambda\right\|^q_{\ell^{\infty,q}}$.

To obtain $S_8(F)\lesssim \left\|\lambda\right\|^q_{\ell^{\infty,q}}$, define
    \begin{equation*}
    \begin{split}
    U(x)&=\omg(r_k)^\frac{1}{q},\quad x\in [k,k+1),\quad k\in\N\cup\{0\};\\
    f(x)&=\frac{\lambda(k)}{\omg(r_k)^\frac{1}{q}},\quad x\in[k,k+1),\quad k\in\N\cup\{0\};\\
    V(x)&=\omg(r_k)^\frac{1}{q},\quad x\in [k,k+1),\quad k\in\N\cup\{0\}.
    \end{split}
    \end{equation*}
Then
	\begin{equation*}
    \begin{split}
 	\left\|\lambda\right\|^q_{\ell^{\infty,q}}
    =\sum_{k=0}^\infty\lambda(k)^q=\int_0^\infty\left(V(x)f(x)\right)^q\,dx
    \end{split}
    \end{equation*}
and
    \begin{equation*}
    \begin{split}
    \int_0^\infty \left(U(x) \int_0^x f(y)\,dy\right)^q\,dx
 	\ge\sum_{k=0}^\infty\left(\sum_{j=0}^{k-1}\lambda(j)\left(\frac{\omg(r_k)}{\omg(r_j)}\right)^{\frac{1}{q}}\right)^q,
    \end{split}
    \end{equation*}
and hence
    $$
    S_8(F)\lesssim\int_0^\infty \left(U(x) \int_0^x f(y)\,dy\right)^q\,dx+\|\lambda\|_{\ell^{\infty,q}}^q.
    $$
Therefore $S_8(F)\lesssim\left\|\lambda\right\|^q_{\ell^{\infty,q}}$ holds by \cite[Theorem~1]{Muck}, once we have shown that
    \begin{equation}
    \begin{split}\label{eq:H10}
 	\sup_{x\ge 0}\left(\int_x^\infty U(y)^q\,dy \right)^\frac{1}{q}
    \left(\int_0^xV(y)^{-q'}\,dy\right)^{\frac{1}{q'}}<\infty.
    \end{split}
    \end{equation}
To see this, let $x\ge 0$ and $N\in\N\cup\{0\}$ such that $N\le x<N+1$.
By using %Lemma~
\eqref{Lemma:weights-in-R} we deduce
    \begin{equation*}
    \begin{split}
 	\int_x^\infty U(y)^q\,dy
    &\le\sum_{k=N}^{\infty}\omg(r_k)
 	\lesssim\frac{\omg(r_N)}{(1-r_N)^{\alpha}}\sum_{k=N}^{\infty}(1-r_k)^{\alpha}
    \asymp\omg(r_N)
    \end{split}
    \end{equation*}
and
    \begin{equation*}
    \begin{split}
 	\int_0^xV(y)^{-q'}\,dy
    =\sum_{k=0}^{N}\omg(r_k)^{\frac{-q'}{q}}
 	\lesssim\frac{(1-r_{N})^{\frac{\alpha q'}{q}}}{\omg(r_{N})^{\frac{q'}{q}}}
    \sum_{k=0}^{N}(1-r_k)^{-\alpha\frac{q'}{q}}
 	\asymp\omg(r_{N})^{-\frac{q'}{q}},
    \end{split}
    \end{equation*}
from which \eqref{eq:H10} follows. Thus $S_8(F)\lesssim\left\|\lambda\right\|^q_{\ell^{\infty,q}}$. This finishes the proof of Case 3.2 and the proof of the theorem as well.

\section{A representation theorem for functions in $\Apqo$}\label{sec3}

To prove Theorem~\ref{th:reverse} some definitions and lemmas are needed.
For each dyadic polar rectangle $Q_{j,l}$ defined in \eqref{eq:qjl}, consider the set of indexes
    $$
    U_{j,l}=\left\{(i,m): {\rm dist}(Q_{i,m},Q_{j,l})\le\frac{1}{K^{j+1}}\left(1-\frac{1}{K}\right)\right\},\quad j\in\N\cup\{0\},\quad l=0,1,\ldots,K^{j+3}-1,
    $$
and denote
    \begin{equation}\label{bigsquare}
    \widehat{Q}_{j,l}=\bigcup_{(i,m)\in U_{j,l}}Q_{i,m}.
    \end{equation}
For $f\in \H(\D)$, define $f_{j,l}=\sup_{z\in Q_{j,l}} |f(z)|$ and $\widehat{f}_{j,l}=\sup_{\z\in \widehat{Q}_{j,l}}|f(\z)|$. Then
    \begin{equation}\label{1}
    \widehat{f}_{j,l}\le\sum_{(i,m)\in U_{j,l}}f_{i,m}\lesssim\widehat{f}_{j,l},\quad j\in\N\cup\{0\},\quad l=0,1,\ldots,K^{j+3}-1,
    \end{equation}
because $\# U_{j,l}$ has a finite uniform bound independent of $j$, $l$ and $K$. For $f\in\H(\D)$, write $\lambda(f)=\{\lambda(f)_{j,l}\}$, where
    \begin{equation}\label{eq:lambda}
    \lambda(f)_{j,l}=K^{-\frac{j}{p}}\whw(r_j)^\frac{1}{q} f_{j,l}
    \end{equation}
for all $j\in\N\cup\{0\}$ and $l=0,1,\ldots,K^{j+3}-1$.

\begin{lemma}\label{th:equivnorm}
Let $0<p,q<\infty$, $\omega\in\DDD$ and $K\in\N\setminus\{1\}$ such that \eqref{K} holds.
Then $\Vert f \Vert_{\Apqo} \asymp \Vert\lambda(f)\Vert_{\ell^{p,q}}$ for all $f\in \H(\D)$.
\end{lemma}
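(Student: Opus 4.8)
The plan is to prove the two one-sided estimates separately, after recording a few uniform facts. Since $\om\in\DDD$ and $K$ satisfies \eqref{K}, I would first note that $\widehat{\om}(r_j)\asymp\widehat{\om}(r)$ for $r\in[r_j,r_{j+1}]$ and $\int_{r_j}^{r_{j+1}}\om(r)\,dr\asymp\widehat{\om}(r_j)$, uniformly in $j$: the comparison of tails follows from \eqref{K} together with the defining inequality of $\DD$ iterated $\lceil\log_2K\rceil$ times, and the second estimate holds because \eqref{K} gives $\widehat{\om}(r_{j+1})\le C^{-1}\widehat{\om}(r_j)$ with $C>1$, so that $\int_{r_j}^{r_{j+1}}\om=\widehat{\om}(r_j)-\widehat{\om}(r_{j+1})\asymp\widehat{\om}(r_j)$. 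In particular $\widehat{\om}(r_j)\asymp\widehat{\om}(r_{j+2})$. I would also use that $M_p(r,f)$ is non-decreasing in $r$ and that $|f|^p$ is subharmonic. Unwinding the definitions, the target reads $\|f\|_{\Apqo}^q\asymp\sum_j K^{-jq/p}\widehat{\om}(r_j)\big(\sum_l f_{j,l}^p\big)^{q/p}$.

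For the bound $\|f\|_{\Apqo}^q\lesssim\|\lambda(f)\|_{\ell^{p,q}}^q$, I would fix $r\in[r_j,r_{j+1})$ and observe that if $t$ lies in the $l$-th angular arc then $re^{it}\in Q_{j,l}$, whence $|f(re^{it})|\le f_{j,l}$. Splitting the circle into the $K^{j+3}$ arcs of equal length gives $M_p^p(r,f)\le K^{-(j+3)}\sum_l f_{j,l}^p$. Raising this to the power $q/p$, integrating over $[r_j,r_{j+1})$ against $\om$, using $\int_{r_j}^{r_{j+1}}\om\asymp\widehat{\om}(r_j)$, and summing over $j$ then produces exactly $\|\lambda(f)\|_{\ell^{p,q}}^q$ up to a constant.

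The reverse inequality is the crux, and the idea is to dominate the supremum defining $f_{j,l}$ by an integral average via subharmonicity. Since each $Q_{j,l}$ has both side lengths comparable to $1-r_j=K^{-j}$ and $\widehat{Q}_{j,l}$ in \eqref{bigsquare} contains a $cK^{-j}$-neighbourhood of $Q_{j,l}$, for every $z\in Q_{j,l}$ there is a disc $D(z,\delta K^{-j})\subset\widehat{Q}_{j,l}$, and the sub-mean-value property yields $f_{j,l}^p\lesssim K^{2j}\int_{\widehat{Q}_{j,l}}|f|^p\,dA$. Summing over $l$ and invoking the bounded overlap of $\{\widehat{Q}_{j,l}\}_l$ recorded in \eqref{1}, I would obtain $\sum_l f_{j,l}^p\lesssim K^{2j}\int_{r_{j-1}}^{r_{j+2}}M_p^p(s,f)\,ds$, with the convention $r_{-1}=r_0=0$. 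Plugging this into the formula for $\|\lambda(f)\|_{\ell^{p,q}}^q$, the factor $K^{-jq/p}$ cancels the $K^{2jq/p}$ coming from the estimate, leaving $\sum_j K^{jq/p}\widehat{\om}(r_j)\big(\int_{r_{j-1}}^{r_{j+2}}M_p^p(s,f)\,ds\big)^{q/p}$.

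Finally I would exploit monotonicity of $M_p$ and the weight comparisons. Bounding the inner integral by $(r_{j+2}-r_{j-1})M_p^p(r_{j+2},f)\asymp K^{-j}M_p^p(r_{j+2},f)$ cancels the remaining $K^{jq/p}$ and gives $\|\lambda(f)\|_{\ell^{p,q}}^q\lesssim\sum_j\widehat{\om}(r_j)M_p^q(r_{j+2},f)$. Using $\widehat{\om}(r_j)\asymp\widehat{\om}(r_{j+2})\asymp\int_{r_{j+2}}^{r_{j+3}}\om$ and $M_p(r_{j+2},f)\le M_p(r,f)$ for $r\ge r_{j+2}$, each summand is at most $\int_{r_{j+2}}^{r_{j+3}}M_p^q(r,f)\om(r)\,dr$; since these intervals are pairwise disjoint, the sum is bounded by $\|f\|_{\Apqo}^q$. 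The main obstacle is precisely this subharmonicity step combined with the careful bookkeeping of the powers of $K$ and the doubling estimates for $\widehat{\om}$ that make them cancel, and setting up the covering geometry of the $\widehat{Q}_{j,l}$ so that $D(z,\delta K^{-j})\subset\widehat{Q}_{j,l}$ with uniformly bounded overlap.
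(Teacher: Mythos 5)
Your proposal is correct and follows essentially the same route as the paper: the easy direction via splitting the circle into the $K^{j+3}$ arcs so that $M_p^p(r,f)\lesssim K^{-j}\sum_l f_{j,l}^p$, and the reverse direction via subharmonicity on small discs of radius comparable to $K^{-j}$ (the paper uses discs around maximizing points inside $A_{j-1}\cup A_j\cup A_{j+1}$ rather than the sets $\widehat{Q}_{j,l}$, but this is cosmetic), followed by the estimate $\widehat{\om}(r_j)\lesssim\int_{r_j}^{r_{j+1}}\om(r)\,dr$ coming from \eqref{K}. The bookkeeping of the powers of $K$ and the tail comparisons $\widehat{\om}(r_j)\asymp\widehat{\om}(r_{j+2})$ match the paper's computation.
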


\begin{proof}
Lemma~\ref{Lemma:replacement-Lemmas-Memoirs}(ii) implies
    \begin{equation*}
    \begin{split}
    \Vert f \Vert^q_{\Apqo}
    %&=\int_0^1M_p^q(r,f)\omega(r)\,dr
    =\sum_{j=0}^\infty \int_{r_j}^{r_{j+1}}M_p^q(r,f)\omega(r)\,dr
    \le\sum_{j=0}^\infty  M_p^q(r_{j+1},f) \whw(r_j)
    \lesssim\sum_{j=1}^\infty  M_p^q(r_{j},f) \whw(r_j),
    \end{split}
    \end{equation*}
where
    \begin{equation*}
    \begin{split}
    M_p^p(r_{j},f)
    &= \int_0^{2\pi} \left| f(r_j e^{i\theta}) \right|^p\,d\theta
    = \sum_{l=0}^{K^{j+3}-1} \int_{\frac{2\pi l}{K^{j+3}}}^{\frac{2\pi (l+1)}{K^{j+3}}} \left| f(r_j e^{i\theta}) \right|^p\,d\theta
    \lesssim K^{-j}\sum_{l=0}^{K^{j+3}-1} f_{j,l}^p,
    \end{split}
    \end{equation*}
and hence
    \begin{equation*}
    \begin{split}
    \Vert f \Vert^q_{\Apqo}
    &\lesssim \sum_{j=1}^\infty \left( K^{-j}\sum_{l=0}^{K^{j+3}-1} f_{j,l}^p\right)^\frac{q}{p}\whw(r_j)
    =\sum_{j=1}^\infty \left( \sum_{l=0}^{K^{j+3}-1}\left(K^{-\frac{j}{p}}\whw(r_j)^\frac{1}{q} f_{j,l}\right)^p\right)^\frac{q}{p}
    \le\|\lambda(f)\|_{\ell^{p,q}}^q.
    %\\&\lesssim \Vert K^{\frac{-k}{p}}\whw(r_k)^\frac{1}{q} f_{k,j}  \Vert^q_{\ell^{p,q}}.
    \end{split}
    \end{equation*}

To prove the reverse inequality, choose
$z^\star_{j,l}\in\overline{Q_{j,l}}$ such that $f_{j,l}=|f(z^\star_{j,l})|$, and $n_0\in\N$ such that
    $$
    r_{j-1}\le r_j-\frac{\text{diam} Q_{j,l}}{K^{n_0}}<r_{j+1}+\frac{\text{diam} Q_{j,l}}{K^{n_0}}\le r_{j+2}
    $$
for all $j$ and $l$. Then the subharmonicity of $|f|^p$ gives
    \begin{equation*}
    %\label{eq:equivnorm3}
    \begin{split}
    \sum_{l=0}^{K^{j+3}-1}f_{j,l}^p
    &\lesssim\sum_{l=0}^{K^{j+3}-1}\frac{1}{|D(z^\star_{j,l},\frac{\text{diam} Q_{j,l}}{K^{n_0}})|}\int_{D\left(z^\star_{j,l},\frac{\text{diam} Q_{j,l}}{K^{n_0}}\right)} |f(\z)|^p\,dA(\z)\\
    &\lesssim K^{2j}\int_{A_{j-1}\cup A_{j}\cup A_{j+1}}|f(\z)|^p dA(\z)
    \lesssim K^{j} M_p^p(r_{j+2},f),\quad j\in\N\cup\{0\},
    \end{split}
    \end{equation*}
with the convenience that $A_{-1}=\emptyset$. Moreover, \eqref{K} implies $\whw(r_j)\le\frac{C}{C-1}\int_{r_j}^{r_{j+1}}\omega(r)\,dr$ for all $j\in\N\cup\{0\}$. These two estimates together with Lemma~\ref{Lemma:replacement-Lemmas-Memoirs}(ii) now yield
    \begin{equation}\label{eq:equivnorm4}
    \begin{split}
    \|\lambda(f)\|^q_{\ell^{p,q}}
    &=\sum_{j=0}^\infty \left(\sum_{l=0}^{K^{j+3}-1}\left(K^{-\frac{j}{p}}\whw(r_j)^\frac{1}{q} f_{j,l}\right)^p\right)^\frac{q}{p}
    \lesssim \sum_{j=0}^\infty\left( K^{-j} \whw(r_j)^\frac{p}{q} K^{j} M_p^p(r_{j+2},f)  \right)^\frac{q}{p}\\
    &\lesssim \sum_{j=2}^\infty \whw(r_{j}) M_p^q(r_{j},f)
    \lesssim\sum_{j=0}^\infty \int_{r_j}^{r_{j+1}}M_p^q(r,f)\omega(r)\,dr
    =\Vert f \Vert^q_{\Apqo},
    \end{split}
    \end{equation}
and therefore the assertion is proved.
\end{proof}

\begin{lemma}\label{le:a1}
Let $0<p\le \infty$, $0<q<\infty$ and $\om\in\DD$, and let $\beta=\beta(\om)>0$ be that of Lemma~\ref{Lemma:replacement-Lemmas-Memoirs}(ii). Then $A^{p,q}_\omega\subset A^1_\eta$ for all $\eta>\frac{\b}{q}+\frac{1}{p}-1$.
\end{lemma}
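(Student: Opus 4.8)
The plan is to reduce the whole statement to a single pointwise growth estimate for $M_p(r,f)$ and then integrate against $(1-r)^\eta$. Writing the $A^1_\eta$-norm in polar coordinates, I would first record that
\[
\|f\|_{A^1_\eta}=\int_\D|f(z)|(1-|z|^2)^\eta\,dA(z)\asymp\int_0^1 M_1(r,f)(1-r)^\eta\,dr,
\]
so it suffices to bound $M_1(r,f)$ by $\|f\|_{\Apqo}$ times an explicit power of $(1-r)$. Since the argument must cover every $0<p\le\infty$, including $p<1$ where $M_1\le M_p$ fails, I would route the estimate through $M_\infty$: combining the trivial inequality $M_1(r,f)\le M_\infty(r,f)$ with the subharmonicity of $|f|^p$ on the disc $D(z,\tfrac{1-|z|}{2})$ gives the standard bound
\[
M_\infty(r,f)\lesssim\frac{1}{(1-r)^{1/p}}\,M_p\!\left(\tfrac{1+r}{2},f\right),\qquad 0<p\le\infty,
\]
with the convention $1/p=0$ when $p=\infty$, in which case the estimate is trivial.

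The second ingredient is a pointwise estimate for $M_p$ itself. Because $M_p(\cdot,f)$ is non-decreasing in $r$, the very definition of the norm yields
\[
\|f\|_{\Apqo}^q\ge M_p^q(t,f)\int_t^1\om(r)\,dr=M_p^q(t,f)\,\widehat{\om}(t),
\]
whence $M_p(t,f)\le\|f\|_{\Apqo}\,\widehat{\om}(t)^{-1/q}$. Evaluating this at $t=\tfrac{1+r}{2}$ and using that $\om\in\DD$, which gives $\widehat{\om}(\tfrac{1+r}{2})\gtrsim\widehat{\om}(r)$, I arrive at
\[
M_1(r,f)\lesssim\frac{\|f\|_{\Apqo}}{(1-r)^{1/p}\,\widehat{\om}(r)^{1/q}}.
\]

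The final step is to control $\widehat{\om}(r)^{-1/q}$ from above. Applying Lemma~\ref{Lemma:replacement-Lemmas-Memoirs}(ii) to the pair $(0,r)$ gives $\widehat{\om}(0)\le C(1-r)^{-\beta}\widehat{\om}(r)$, that is, the lower bound $\widehat{\om}(r)\gtrsim(1-r)^{\beta}$. Substituting this into the previous display and integrating yields
\[
\|f\|_{A^1_\eta}\lesssim\|f\|_{\Apqo}\int_0^1(1-r)^{\,\eta-\frac1p-\frac\beta q}\,dr,
\]
and the last integral converges precisely when $\eta-\frac1p-\frac\beta q>-1$, i.e. $\eta>\frac\beta q+\frac1p-1$, which is exactly the asserted range. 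The only genuinely delicate point is the uniformity of the argument over the full parameter range $0<p\le\infty$: the device of passing through $M_\infty$ by subharmonicity is what produces the factor $(1-r)^{-1/p}$ and makes a single proof simultaneously valid for $p<1$ and for $p=\infty$; everything else is the monotonicity of $M_p$ and the two equivalent forms of the $\DD$ condition.
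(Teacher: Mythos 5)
Your proof is correct and follows essentially the same route as the paper: both arguments pass through the pointwise bound $M_\infty(r,f)\lesssim\|f\|_{\Apqo}\,\widehat{\om}(r)^{-1/q}(1-r)^{-1/p}$ obtained from the standard subharmonicity inequality, the monotonicity of $M_p$ against $\widehat{\om}$, and the $\DD$ condition, and then conclude by using Lemma~\ref{Lemma:replacement-Lemmas-Memoirs}(ii) to reduce to the convergent integral $\int_0^1(1-r)^{\eta-\frac1p-\frac{\b}{q}}\,dr$. No gaps.
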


\begin{proof}
If $f\in\H(\D)$ and $\om\in\DD$, then the well known inequality $M_\infty(r,f)\lesssim
M_p(\frac{1+r}{2},f)(1-r)^{-1/p}$ gives
    \begin{equation*}
    \begin{split}
    \|f\|_{\Apqo}^q&\ge\int_{\frac{1+r}{2}}^1M_p^q(s,f)\omega(s)\,ds
    \ge
    M_p^q\left(\frac{1+r}{2},f\right)\widehat{\om}\left(\frac{1+r}{2}\right)
    \gtrsim M_\infty^q(r,f)\widehat{\om}(r)(1-r)^\frac{q}{p},
    \end{split}
    \end{equation*}
from which Lemma~\ref{Lemma:replacement-Lemmas-Memoirs}(ii) yields
    $$
    \|f\|_{A^1_\eta}\lesssim\|f\|_{\Apqo}\int_0^1\frac{(1-r)^{\eta-\frac1p}}{\widehat{\om}(r)^\frac1q}\,dr
    \lesssim\frac{\|f\|_{\Apqo}}{\widehat{\om}(0)^\frac1q}\int_0^1(1-r)^{\eta-\frac1p-\frac\b{q}}\,dr\asymp\|f\|_{\Apqo},
    $$
and the assertion follows.
\end{proof}

\begin{Prf}{\em{Theorem~\ref{th:reverse}}.}
The fact that the functions of the form \eqref{Reveq} with $\lambda(f)=\{\lambda(f)_{j,l}^k\}\in\ell^{p,q}$ belong to $A^{p,q}_\omega$ and the inequality
    $
   \left\|\{\lambda(f)_{j,l}^k\} \right\|_{\ell^{p,q}}\lesssim\Vert f\Vert_{A^{p,q}_\omega}
    $
follow from Theorem~\ref{th:atomicdecomx}.

Let us now prove that each function in $\Apqo$ is of the form \eqref{Reveq}, where $\lambda(f)=\{\lambda(f)_{j,l}^k\}\in\ell^{p,q}$, and
    $
   \Vert f\Vert_{A^{p,q}_\omega}\lesssim\left\|\{\lambda(f)_{j,l}^k\} \right\|_{\ell^{p,q}}.
    $
To do this we use ideas from \cite[(1.5) Theorem]{RicciTaibleson1983}. Let $\eta=\eta(p,q,\om)>1+\frac{1}{p}+\frac{\b+\gamma}{q}$, where $\beta=\beta(\om)>0$ and $\gamma=\gamma(\om)>0$ are those of Lemma~\ref{Lemma:replacement-Lemmas-Memoirs}(ii)(iii). Then $\eta$ satisfies the condition \eqref{eq:M} assumed on $M$ in Theorem~\ref{th:atomicdecomx}, and $A^{p,q}_\omega\subset A^1_\eta$ by Lemma~\ref{le:a1}. Therefore, in particular,
    \begin{equation*}
    P_\eta(f)(z)=(\eta+1)\int_{\D}\frac{f(\z)}{(1-\overline{\z}z)^{2+\eta}}(1-|\z|^2)^{\eta}dA(\z)=f(z),\quad z\in\D,\quad f\in A^{p,q}_\omega.
    \end{equation*}
Consider the operator $S_\eta$ defined by
    \begin{equation*}
    \begin{split}
    S_\eta(f)(z)
    &=(\eta+1)\sum_{j,l,k}f(\z_{j,l}^k)\frac{(1-|\z_{j,l}^k|^2)^{\eta}}{\left(1-\overline{\z_{j,l}^k}z\right)^{\eta+2}}\left|Q_{j,l}^k \right|\\
    &=(\eta+1)\sum_{j,l,k}f(\z_{j,l}^k)(1-|\z_{j,l}^k|^2)^{\frac{1}{p}}\whw(r_j)^{\frac{1}{q}}
    \frac{(1-|\z_{j,l}^k|^2)^{\eta-\frac{1}{p}}\whw(r_j)^{-\frac{1}{q}}}{\left(1-\overline{\z_{j,l}^k}z\right)^{\eta+2}}\left|Q_{j,l}^k \right|\\
    &=(\eta+1) \sum_{j,l,k}a_{j,l,k}(f)
    \frac{(1-|\z_{j,l}^k|^2)^{\eta-\frac{1}{p}}\whw(r_j)^{-\frac{1}{q}}}{\left(1-\overline{\z_{j,l}^k}z\right)^{\eta+2}}\left|Q_{j,l}^k \right|,\quad z\in\D,
    \end{split}
    \end{equation*}
where
    \begin{equation}\label{a}
    a(f)_{j,l,k}= f(\z_{j,l}^k)(1-|\z_{j,l}^k|^2)^{\frac{1}{p}}\whw(r_j)^{\frac{1}{q}},\quad j\in\N\cup\{0\},\quad l=0,\ldots,K^j-1,\quad k=1,\ldots,M^2.
    \end{equation}
The estimate $M_\infty(r,f)\lesssim\|f\|_{A^{p,q}_\om}\widehat{\om}(r)^{-\frac1q}(1-r)^{-\frac1p}$, obtained in the proof of Lemma~\ref{le:a1}, ensures that $S_\eta(f)$ is well defined for each $f\in A^{p,q}_\om$. Write $a(f)=\{a(f)_{j,l,k}\}$ and observe that
\begin{equation}\label{eq:iter}
 \Vert a(f)\Vert_{\ell^{p,q}}\lesssim\|\lambda(f)\|_{\ell^{p,q}}\asymp\Vert f \Vert_{\Apqo}
\end{equation}
by Lemma~\ref{th:equivnorm}. It is shown next that for $M$ large enough, $S_\eta$ satisfies
    \begin{equation}\label{enum13}
    \Vert f-S_\eta(f) \Vert_{\Apqo}\le\frac{1}{2}\Vert f \Vert_{\Apqo}.
    \end{equation}
To see this, note first that
    \begin{equation}\label{P-S}
    \begin{split}
    f(z)-S_\eta(f)(z)
    &=P_\eta(f)(z)-S_\eta(f)(z) \\
    &=(\eta+1)\Bigg(\int_{\D}\frac{f(\z)}{(1-\overline{\z}z)^{2+\eta}}(1-|\z|^2)^{\eta}dA(\z)\\
    &\quad-\sum_{j,l,k}f(\z_{j,l}^k)\frac{\left(1-|\z_{j,l}^k|^2\right)^\eta}{\left(1-\overline{\z_{j,l}^k}z\right)^{\eta+2}}\left|Q_{j,l}^k \right|\Bigg)\\
    &=(\eta+1)\sum_{j,l,k}\int_{Q_{j,l}^k }\left(H_z(\z)-H_z(\z_{j,l}^k)\right)dA(\z),\quad z\in\D,
    \end{split}
    \end{equation}
where $H_z(\z)=f(\z)\frac{(1-|\z|^2)^\eta}{(1-\overline{\z}z)^{\eta+2}}$ for all $z,\z\in\D$. It is clear that $H_z$ satisfies
    \begin{equation}\label{estimate}
    \begin{split}
    \left|H_z(\z)-H_z(\z_{j,l}^k) \right|
    &\le\text{diam } Q_{j,l}^k \sup_{w \in Q_{j,l}^k} \left|\nabla H_z(w) \right|,\quad \z\in Q_{j,l}^k,
    \end{split}
    \end{equation}
and also
    \begin{equation}\label{deriv1}
    \begin{split}
    \frac{\partial}{\partial \z} H_z(\z)=\left( f'(\z)(1-|\z|^2)-f(\z)\eta\overline{\z}\right)\frac{(1-|\z|^2)^{\eta-1}}{(1-\overline{\z}z)^{\eta+2}},\quad \z\in\D,
    \end{split}
    \end{equation}
and
    \begin{equation}\label{deriv2}
    \begin{split}
    \frac{\partial}{\partial \overline{\z}}H_z(\z)
    = f(\z)\frac{(1-|\z|^2)^{\eta-1}}{(1-\overline{\z}z)^{\eta+2}}\left( (\eta+2)z\frac{1-|\z|^2}{1-\overline{\z}z}-\eta\z\right),\quad \z\in\D.
    \end{split}
    \end{equation}
Moreover, the Cauchy integral formula implies
    \begin{equation}\label{Cauchy-integral}
    \begin{split}
    |f^{(n)}(\z)|
    \lesssim \int_{|\xi-\z|=\frac{1}{K^{j+1}\left(1-\frac1K\right)}}\frac{|f(\xi)|}{|\xi-\z|^{n+1}}\,|d\xi|
    \lesssim K^{jn}\widehat{f}_{j,l},\quad \z \in Q_{j,l},\quad n\in \N\cup \{0\}.
    \end{split}
    \end{equation}
The identities \eqref{deriv1} and  \eqref{deriv2} together with the estimate \eqref{Cauchy-integral} now give
    \begin{equation}\label{gradient}
    \begin{split}
    \sup_{w \in Q_{j,l}^k}\left|\nabla H_z(w) \right|
    &\lesssim  \frac{(1-|\z_{j,l}^k|^2)^{\eta-1}}{\left|1-z\overline{\z_{j,l}^k}\right|^{\eta+2}}  \sup_{w \in \widehat{Q}_{j,l}} |f(w)|
    =\frac{(1-|\z_{j,l}^k|^2)^{\eta-1}}{\left|1-z\overline{\z_{j,l}^k}\right|^{\eta+2}}\widehat{f}_{j,l}.
    \end{split}
    \end{equation}
The identity \eqref{P-S} together with the estimates \eqref{estimate}, \eqref{gradient} and \eqref{1} give
    \begin{equation*}
    \begin{split}
    |f(z)-S_\eta(f)(z)|
    &\lesssim\sum_{j,l,k}\left(\int_{Q_{j,l}^k}\left(\text{diam } Q_{j,l}^k \frac{(1-|\z^k_{j,l}|^2)^{\eta-1}}{\left|1-z\overline{\z^k_{j,l}}\right|^{\eta+2}}\sup_{w\in\widehat{Q}_{j,l}} |f(w)|\right)dA(\z)\right)\\
    &\lesssim\sum_{j,l}\sum_{k=1}^{M^2}\left(\left|Q_{j,l}^k\right|  \text{diam } Q_{j,l}^k \frac{(1-|\z^k_{j,l}|^2)^{\eta-1}}{\left|1-z\overline{\z^k_{j,l}}\right|^{\eta+2}} \widehat{f}_{j,l} \right)\\
    &\lesssim\sum_{j,l} \frac{1}{M^3} \frac{(1-|\z_{j,l}|^2)^{\eta+2}}{\left|1-z\overline{\z_{j,l}}\right|^{\eta+2}} \widehat{f}_{j,l}\sum_{k=1}^{M^2} 1  \\
    & \le  \frac{1}{M}\sum_{j,l}  \frac{(1-|\z_{j,l}|^2)^{\eta+2}}{\left|1-z\overline{\z_{j,l}}\right|^{\eta+2}} 
    \sum_{(i,m)\in U_{k,j}}f_{i,m}\\
    &\lesssim \frac{1}{M}\sum_{j,l}\lambda(f)_{j,l}\frac{(1-|\z_{j,l}|^2)^{\eta+2-\frac{1}{p}}\whw(r_j)^{-\frac{1}{q}}}{\left|1-z\overline{\z_{j,l}}\right|^{\eta+2}},
    \end{split}
    \end{equation*}
where $\{\lambda_{j,l}\}$ is that of \eqref{eq:lambda}. Then, by combining the above estimate with Theorem~\ref{th:atomicdecomx}, with $M=\eta+2$, and Lemma~\ref{th:equivnorm} it follows that
    \begin{equation*}
    \begin{split}
    \Vert f -S_\eta(f)\Vert_{\Apqo} \lesssim  \frac{1}{M} \Vert \lambda(f) \Vert_{\ell^{p,q}} \asymp  \frac{1}{M} \Vert f\Vert_{\Apqo}.
    \end{split}
    \end{equation*}
The inequality \eqref{enum13} follows by choosing $M$ large enough.

Let $\{f_n\}_{n=1}^\infty$ be defined by $f_1=S_\eta(f)$ and $f_n=S_\eta\left(f-\sum_{m=1}^{n-1}f_m \right)$ for $n\in\N\setminus\{1\}$. Further, let $a(f)^{(1)}_{j,l,k}=a(f)_{j,l,k}$, where $\{a(f)_{j,l,k}\}$ are those defined in \eqref{a}, and
    $$
    a(f)^{(n)}_{j,l,k}= \left(f-\sum_{m=1}^{n-1} f_{m}\right)(\z_{j,l}^k)(1-|\z_{j,l}^k|^2)^{\frac{1}{p}}\whw(r_j)^{\frac{1}{q}},\quad n\in\N\setminus\{1\}.
    $$
With this notation
    \begin{equation}\label{enum21}
    f_n(z)=(\eta+1) \sum_{j,l,k} a(f)^{(n)}_{j,l,k}\frac{(1-|\z_{j,l}^k|^2)^{\eta-\frac{1}{p}}\whw(r_j)^{-\frac{1}{q}}}{\left(1-\overline{\z_{j,l}^k}z\right)^{\eta+2}}\left|Q_{j,l}^k \right|,\quad n\in\N,
    \end{equation}
by the definition of $S_\eta$. Moreover, $n$ applications of \eqref{enum13} give
    \begin{equation}\label{enum23}
    \begin{split}
    \left\| f-\sum_{m=1}^n f_m \right\|_{\Apqo}
    &=\left\|  f-\sum_{m=1}^{n-1} f_m -f_n\right\|_{\Apqo}
    =\left\|  \left(Id-S_\eta\right) \left( f-\sum_{m=1}^{n-1} f_m\right)\right\|_{\Apqo}\\
    &\leq \frac{1}{2}\left\| \left( f-\sum_{m=1}^{n-1} f_m\right)\right\|_{\Apqo}
    \leq \cdots
    \leq \frac{1}{2^n}\left\| f \right\|_{\Apqo}.
    \end{split}
    \end{equation}
Therefore, by denoting $a(f)^{(n)}=\{a(f)^{(n)}_{j,l,k}\}$, and applying
\eqref{eq:iter} to $f-\sum_{m=1}^{n-1}f_m$
 yields
    \begin{equation}\label{enum22}
    \begin{split}
    \Vert a(f)^{(n)} \Vert_{\ell^{p,q}} \lesssim \left\| f-\sum_{m=1}^{n-1}f_m  \right\|_{\Apqo} \le 2^{-n+1}\Vert f \Vert_{\Apqo},\quad n\in\N.
    \end{split}
    \end{equation}

Finally, set $b(f)_{j,l,k}=\sum_{n=1}^\infty a(f)^{(n)}_{j,l,k}$ and
    $$
    g(z)=(\eta+1) \sum_{j,l,k} b(f)_{j,l,k}\frac{(1-|\z_{j,l}^k|^2)^{\eta-\frac{1}{p}}\whw(r_j)^{-\frac{1}{q}}}{(1-\overline{\z_{j,l}^k}z)^{\eta+2}}\left|Q_{j,l}^k \right|,\quad z\in\D.
    $$
Then \eqref{enum21} yields
    \begin{equation*}
    \begin{split}
    g(z)-\sum_{m=1}^n f_m
    &=(\eta+1) \sum_{j,l,k} \left(b(f)_{j,l,k}- \sum_{m=1}^{n}a(f)^{(m)}_{j,l,k}\right) \frac{(1-|\z_{j,l}^k|^2)^{\eta-\frac{1}{p}}\whw(r_j)^{-\frac{1}{q}}}{(1-\overline{\z_{j,l}^k}z)^{\eta+2}}\left|Q_{j,l}^k \right|\\
    &=\sum_{j,l,k}\left(\sum_{m=n+1}^{\infty}a(f)^{(m)}_{j,l,k}\right) \frac{(1-|\z_{j,l}^k|^2)^{\eta-\frac{1}{p}}\whw(r_j)^{-\frac{1}{q}}}{(1-\overline{\z_{j,l}^k}z)^{\eta+2}}\left|Q_{j,l}^k\right|,
    \end{split}
    \end{equation*}
from which Theorem~\ref{th:atomicdecomx} and \eqref{enum22} give
    \begin{equation*}
    \begin{split}
    \left\|g-\sum_{m=1}^n f_m\right\|_{\Apqo}
    &\lesssim\left\| \left\{\sum_{m=n+1}^{\infty}a^{(m)}_{j,l,k}\right\}_{j,l,k}\right\|_{\ell^{p,q}}
    \lesssim\left\| f \right\|_{\Apqo}\left(\sum_{m=n+1}^\infty 2^{-m\min\{1,p,q\}}\right)^\frac1{\min\{1,p,q\}}\\
    &\asymp2^{-n}\left\| f \right\|_{\Apqo},\quad n\in\N.
    \end{split}
    \end{equation*}
By combining this with \eqref{enum23} we deduce
    \begin{equation}\label{P-S-6}
    \begin{split}
    \left\| f-g \right\|_{\Apqo}
    &\leq \left\| f-\sum_{m=1}^n f_m+\sum_{m=1}^n f_m-g \right\|_{\Apqo}
    \\ & \lesssim \left\| f-\sum_{m=1}^n f_m \right\|_{\Apqo} + \left\| g- \sum_{m=1}^n f_m\right\|_{\Apqo} \\
    %&\lesssim  \left\| b-\sum_{m=1}^{n-1}a^m  \right\|_{p,q} +\frac{1}{2^n}\left\| f \right\|_{\Apqo}
    &\lesssim  2^{-n}\left\| f \right\|_{\Apqo},\quad n\in\N\setminus\{1\},
    \end{split}
    \end{equation}
and it follows that $f=g$. The assertion of the theorem follows for $M=\eta+2$ and
    $$
    \lambda(f)_{j,l}^k=(\eta+1)b(f)_{j,l,k}\frac{|Q_{j,l}^k|}{(1-|\z_{j,l}^k|^2)^2},
    $$
because $\Vert \{\lambda(f)_{j,l}^k\} \Vert_{\ell^{p,q}}\lesssim \left\| f \right\|_{\Apqo}$ by \eqref{enum22}. This finishes the proof.
 \end{Prf}

\section{Differentiation operators from $A^{p,q}_\om$ to $L^s_\mu$}\label{sec5}

We recall that the spaces $\ell^{p,q}$ obey the basic inclusion relations $\ell^{p,q} \subset \ell^{r,q}$ for $p\le r$, and $\ell^{p,q} \subset \ell^{p,s}$ if $q\le s$. Moreover, it is known that by denoting
    \begin{equation*}
    p'=\left\{
        \begin{array}{cl}
        \infty,\, & 0<p\le 1,\\
        \frac{p}{p-1},\, & 1<p<\infty,\\
        1,\,&p=\infty,
        \end{array}\right.
    \end{equation*}
we have
    $$
    \left\| b\right\|_{\ell^{p',q'}}=\sup \left\{ \left|\sum_{j,l} c_{j,l}b_{j,l} \right|: \left\|c \right\|_{\ell^{p,q}}=1\right\}
    $$
by \cite[Theorem~1]{Nakamura}. The following proof uses ideas form the proof of \cite[Theorem~2]{Luecking}.

\medskip

\begin{Prf}{\em{Theorem~\ref{Theorem:Carleson}.}}
Assume first that $D^{(n)}:\Apqo \to L^s_\mu$ is bounded. Let
    $$
    F_t(z)=\sum_{j,l}a_{j,l}(t)\lambda_{j,l}\frac{(1-|z_{j,l}|)^{M-\frac{1}{p}}\omg(z_{j,l})^{-\frac{1}{q}}}{\left(1-\overline{z_{j,l}}z\right)^M},\quad z\in\D,
    $$
where $\{z_k\}$ is a separated sequence and $a_{j,l}$ are the Rademacher functions \cite[Appendix~A]{Durenhp} and $M$ satisfies the hypothesis \eqref{eq:M} of Theorem~\ref{th:atomicdecomx}. Then
    \begin{equation}\label{eq:CMt}
    \Vert F_t^{(n)}\Vert_{L^s_\mu}
    \le\Vert D^{(n)} \Vert_{\Apqo\to L^s_\mu}\Vert F_t \Vert_{\Apqo}
    \lesssim\Vert D^{(n)} \Vert_{\Apqo\to L^s_\mu}\Vert \lambda \Vert_{\ell^{p,q}}.
\end{equation}
Moreover, Khinchine's inequality~\cite[Appendix~A]{Durenhp} yields
    \begin{equation}\label{eq:CMk}
    \begin{split}
   &\int_{0}^{1} \Vert F_t^{(n)}\Vert^s_{L^s_\mu} \,dt
    \\ &=\int_{\D} \int_{0}^{1}\left|M(M+1)\cdots(M+n-1)\sum_{j,l} a_{j,l}(t)\lambda_{j,l}\overline{z_{j,l}}^n\frac{(1-|z_{j,l}|)^{M-\frac{1}{p}}\omg(z_{j,l})^{-\frac{1}{q}}}{\left(1-\overline{z_{j,l}}z\right)^{M+n}}\right|^s \,dt\, d\mu(z) \\
    &\gtrsim \int_{\D} \left(\sum_{j,l} \left|\lambda_{j,l}\frac{(1-|z_{j,l}|)^{M-\frac{1}{p}}\omg(z_{j,l})^{-\frac{1}{q}}}{\left(1-\overline{z_{j,l}}z\right)^{M+n}}\right|^2\right)^\frac{s}{2} d\mu(z)\\
    &\gtrsim \int_{\D} \left(\sum_{j,l} \left|\chi_{Q_{j,l}}(z)\lambda_{j,l}(1-|z_{j,l}|)^{-n-\frac{1}{p}}\omg(z_{j,l})^{-\frac{1}{q}}\right|^2\right)^\frac{s}{2} d\mu(z)\\
    &=\sum_{j,l}|\lambda_{j,l}|^s\mu(Q_{j,l})(1-|z_{j,l}|)^{-s\left(n+\frac{1}{p}\right)}\omg(z_{j,l})^{-\frac{s}{q}}\\
    &\asymp \sum_{j,l}|\lambda_{j,l}|^s\mu(Q_{j,l}) K^{js\left(n+\frac{1}{p}\right)}\omg(r_j)^{-\frac{s}{q}}.
    \end{split}
    \end{equation}
By integrating \eqref{eq:CMt} with respect to $t$, using \eqref{eq:CMk} and writing $b=\{b_{j,l}\}=\{|\lambda_{j,l}|^s\}$ we obtain
    \begin{equation*}
    \sum_{j,l} b_{j,l} \mu(Q_{j,l}) K^{js\left(n+\frac{1}{p}\right)}\omg(r_j)^{-\frac{s}{q}}
    \lesssim\Vert D^{(n)} \Vert^s_{\Apqo\to L^s_\mu}\Vert b \Vert_{\ell^{\left(\frac{p}{s}\right),\left(\frac{q}{s}\right)}}
    \end{equation*}
for all $b\in \ell^{\left(\frac{p}{s}\right),\left(\frac{q}{s}\right)}$ with $b_{j,l}\ge0$. It follows that
    $$
    \left\{\mu(Q_{j,l})K^{sj(n+\frac{1}{p})}\whw(r_j)^{\frac{-s}{q}}\right\}_{j,l}\in \ell^{\left(\frac{p}{s}\right)',\left(\frac{q}{s}\right)'}
    $$
with norm bounded by a constant times $\Vert D^{(n)} \Vert^s_{\Apqo\to L^s_\mu}$ by \cite[Theorem~1]{Nakamura}. Thus (ii) is satisfied.

To see the converse implication, note first that the estimate \eqref{Cauchy-integral} implies $f^{(n)}_{j,l}\lesssim K^{jn}\widehat{f}_{j,l}$. This together with the fact that $\# U_{j,l}$ has a finite uniform bound independent of $j$, $l$ and $K$, and Lemma~\ref{th:equivnorm} give
    \begin{equation*}
    \begin{split}
    \left\|\left\{f^{(n)}_{j,l}K^{-j\left(n+\frac1p\right)}\whw(r_j)^\frac{1}{q} \right\}_{j,l}\right\|_{\ell^{p,q}}
    &\lesssim\left\|\left\{\widehat{f}_{j,l}K^{-\frac{j}{p}}\whw(r_j)^\frac{1}{q}\right\}_{j,l}\right\|_{\ell^{p,q}}
    \lesssim\left\|\left\{f_{j,l}K^{-\frac{j}{p}}\whw(r_j)^\frac{1}{q}\right\}_{j,l}\right\|_{\ell^{p,q}}\\
    &=\left\|\lambda(f)\right\|_{\ell^{p,q}}\asymp\|f\|_{A^{p,q}_\om}.
    \end{split}
    \end{equation*}
By applying \cite[Theorem~1]{Nakamura} and the estimate just established, we deduce
    \begin{equation*}
    \begin{split}
    \int_\D|f^{(n)}(z)|^s\,d\mu(z)
    &=\sum_{j,l}\int_{Q_{j,l}}|f^{(n)}(z)|^s\,d\mu(z)
    \le\sum_{j,l}\left(f_{j,l}^{(n)}\right)^s\mu(Q_{j,l})\\
    &=\sum_{j,l}\left(f_{j,l}^{(n)}\right)^sK^{-js\left(n+\frac{1}{p}\right)}\whw(r_j)^\frac{s}{q}\mu(Q_{j,l})K^{js\left(n+\frac{1}{p}\right)}\whw(r_j)^{-\frac{s}{q}}\\
    &\le\left\|\left\{\left(f_{j,l}^{(n)}\right)^sK^{-js\left(n+\frac{1}{p}\right)}\whw(r_j)^\frac{s}{q}\right\}_{j,l}\right\|_{\ell^{\frac{p}{s},\frac{q}{s}}}\\
    &\quad\cdot\left\|\left\{\mu(Q_{j,l})K^{js\left(n+\frac{1}{p}\right)}\whw(r_j)^{-\frac{s}{q}}\right\}_{j,l}\right\|_{\ell^{\left(\frac{p}{s}\right)',\left(\frac{q}{s}\right)'}}\\
    &=\left\|\left\{f_{j,l}^{(n)}K^{-j\left(n+\frac{1}{p}\right)}\whw(r_j)^\frac{1}{q}\right\}_{j,l}\right\|_{\ell^{p,q}}^s\\
    &\quad\cdot\left\|\left\{\mu(Q_{j,l})K^{js\left(n+\frac{1}{p}\right)}\whw(r_j)^{-\frac{s}{q}}\right\}_{j,l}\right\|_{\ell^{\left(\frac{p}{s}\right)',\left(\frac{q}{s}\right)'}}\\
    &\lesssim \Vert f\Vert^s_{\Apqo}
    \left\|\left\{\mu(Q_{j,l})K^{js\left(n+\frac{1}{p}\right)}\whw(r_j)^{-\frac{s}{q}}\right\}_{j,l}\right\|_{\ell^{\left(\frac{p}{s}\right)',\left(\frac{q}{s}\right)'}},
    \end{split}
    \end{equation*}
and hence
    $$
    \|D^{(n)}\|^s_{\Apqo\to L^s_\mu}
    \lesssim \left\|\left\{\mu(Q_{j,l})2^{sj\left(n+\frac{1}{p}\right)}\whw(r_j)^{-\frac{s}{q}}\right\}_{j,l} \right\|_{\ell^{\left(\frac{p}{s}\right)',\left(\frac{q}{s}\right)'}}.
    $$

It remains to show that (ii) is equivalent to its continuous counterpart (iii). To do this, first define $U_{j,l}^r=\left\{(i,m):\varrho(Q_{j,l},Q_{i,m})<r\right\}$, and note that $\sup_{j,l}\#U_{j,l}^r\le C(r)<\infty$. Further, set $\widehat{Q}^r_{j,l}=\cup_{(i,m)\in U^r_{j,l}}Q_{i,m}$ and write $T_r=T_{r,u,v}$ for short. Assume first $s<\min\{p,q\}$. Then, by choosing $r=r(K)>0$ sufficiently large we have
    \begin{equation*}
    S_r(z)=\sum_j \sum_l \frac{\mu(Q_{j,l})}{K^{-j(sn+1)}\whw(r_j)}\chi_{Q_{j,l}}(z) \lesssim T_r(z) \lesssim \sum_j \sum_l \frac{\mu(\widehat{Q}^r_{j,l})}{K^{-j(sn+1)}\whw(r_j)}\chi_{Q_{j,l}}(z)=B_r(z)
    \end{equation*}
for all $z\in\D$. By using $\sup_{j,l}\#U_{j,l}^r<\infty$ we deduce
    \begin{equation*}
    \begin{split}
    \left\|B_r \right\|^{\frac{q}{q-s}}_{L^{\frac{p}{p-s},\frac{q}{q-s}}_\omega}
    &=\int_0^1 \left( \int_0^{2\pi} B_r(te^{i\theta})^{\frac{p}{p-s}}d\theta \right)^{\frac{q(p-s)}{p(q-s)}}\omega(t)\,dt\\
    &\asymp\sum_{k=0}^\infty \int_{r_k}^{r_{k+1}} \left( \sum_{i=0}^{K^{k+3}-1}  \mu(\widehat{Q}^r_{k,i})^{\frac{p}{p-s}}K^{k\frac{p}{p-s}(1+sn)}K^{-k}\widehat{\omega}(r_k)^{-\frac{p}{p-s}} \right)^{\frac{q(p-s)}{p(q-s)}}\omega(t)\,dt\\
    &\asymp\sum_{k=0}^\infty\left(\sum_{i=0}^{K^{k+3}-1}\left(\mu(\widehat{Q}^r_{k,i}) K^{ks(n+\frac{1}{p})}\widehat{\omega}(r_k)^{-\frac{s}{q}}\right)^\frac{p}{p-s} \right)^{\frac{q(p-s)}{p(q-s)}}\\
    &\lesssim\sum_{k=0}^\infty\left(\sum_{i=0}^{K^{k+3}-1}\left(\mu(Q_{k,i}) K^{ks(n+\frac{1}{p})}\widehat{\omega}(r_k)^{-\frac{s}{q}}\right)^\frac{p}{p-s} \right)^{\frac{q(p-s)}{p(q-s)}},
    \end{split}
    \end{equation*}
and an essentially identical reasoning gives
    \begin{equation*}
    \begin{split}
    \left\|S_r \right\|^{\frac{q}{q-s}}_{L^{\frac{p}{p-s},\frac{q}{q-s}}_\omega}
    &\asymp\sum_{k=0}^\infty\left(\sum_{i=0}^{K^{k+3}-1}\left(\mu(Q_{k,i}) K^{ks(n+\frac{1}{p})}\omega(r_k)^{-\frac{s}{q}}\right)^\frac{p}{p-s} \right)^{\frac{q(p-s)}{p(q-s)}}.
    \end{split}
    \end{equation*}
This finishes the proof of the case $s<\min\{p,q\}$ for $r>0$ large enough.

In the case $p\leq s<q$, we have
    \begin{equation*}
    \begin{split}
    S_r(z)
    =\sum_j \sum_l \frac{\mu(Q_{j,l})}{K^{-js(n+\frac{1}{p})}\whw(r_j)}\chi_{Q_{j,l}}(z)
    \lesssim T_r(z)
    \lesssim \sum_j \sum_l \frac{\mu(\widehat{Q}^r_{j,l})}{K^{-js(n+\frac{1}{p})}\whw(r_j)}\chi_{Q_{j,l}}(z)=B_r(z)
    \end{split}
    \end{equation*}
for all $z\in\D$. By using again $\sup_{j,l}\#U_{j,l}^r<\infty$ we deduce
    \begin{equation*}
    \begin{split}
    \left\|B_r \right\|_{L^{\infty,\frac{q}{q-s}}_\omega}
    &= \left(\int_0^1 \left( M_\infty(B_r,t)  \right)^{\frac{q}{q-s}}\omega(t)\,dt \right)^{\frac{q-s}{q}}\\
    &\asymp \left(\sum_{k=0}^\infty \int_{r_k}^{r_{k+1}} \left( K^{ks(n+\frac{1}{p})}\whw(r_k)^{-1}\sup_i  \mu(\widehat{Q}^r_{k,i}) \right)^{\frac{q}{q-s}}\omega(t)\,dt\right)^{\frac{q-s}{q}}\\
    &\asymp \left(\sum_{k=0}^\infty\left( \sup_i  \mu(\widehat{Q}^r_{j,i}) K^{ks(n+\frac{1}{p})}\widehat{\omega}(r_k)^{-\frac{s}{q}}\right)^{\frac{q}{q-s}}\right)^{\frac{q-s}{q}}\\
    &\lesssim \left(\sum_{k=0}^\infty\left( \sup_i  \mu(Q_{j,i}) K^{ks(n+\frac{1}{p})}\omega(r_k)^{-\frac{s}{q}}\right)^{\frac{q}{q-s}}\right)^{\frac{q-s}{q}},
    \end{split}
    \end{equation*}
and similarly
    $$
    \left\|S_r \right\|_{L^{\infty,\frac{q}{q-s}}_\omega}
    \asymp \left(\sum_{k=0}^\infty\left( \sup_i  \mu(Q_{j,i}) K^{ks(n+\frac{1}{p})}\omega(r_k)^{-\frac{s}{q}}\right)^{\frac{q}{q-s}}\right)^{\frac{q-s}{q}}.
    $$
Hence the case $p\leq s<q$ is proved for $r>0$ large enough.

In the case $q\leq s<p$ we have
    \begin{equation*}
    \begin{split}
    S_r(z)
    =\sum_j \sum_l \frac{\mu(Q_{j,l})}{K^{-j(sn+1)}\whw(r_j)^{\frac{s}{q}}}\chi_{Q_{j,l}}(z)
    \lesssim T_r(z)\lesssim \sum_j \sum_l \frac{\mu(\widehat{Q}^r_{j,l})}{K^{-j(sn+1)}\whw(r_j)^{\frac{s}{q}}}\chi_{Q_{j,l}}(z)=B_r(z)
    \end{split}
    \end{equation*}
for all $z\in\D$. By using again $\sup_{j,l}\#U_{j,l}^r<\infty$ we deduce
    \begin{equation*}
    \begin{split}
    \left\|B_r \right\|_{L^{\frac{p}{p-s},\infty}_\omega}
    &=\sup_{0<t<1} \left( \int_0^{2\pi} B_r(te^{i\theta})^{\frac{p}{p-s}}d\theta \right)^{\frac{p-s}{p}}\\
    &\asymp \sup_j  \left(\sum_{i=0}^{K^{k+3}-1} \mu(\widehat{Q}^r_{j,i})^{\frac{p}{p-s}}K^{j\frac{p}{p-s}(sn+1)}K^{-j}\widehat{\omega}(r_j)^{-\frac{sp}{q(p-s)}} \right)^{\frac{p-s}{p}}\\
    &\asymp  \sup_j  \left(\sum_{i=0}^{K^{k+3}-1}\left(\mu(\widehat{Q}^r_{j,i}) K^{js(n+\frac{1}{p})}\widehat{\omega}(r_j)^{-\frac{s}{q}}\right)^\frac{p}{p-s} \right)^{\frac{p-s}{p}}\\
    &\lesssim  \sup_j  \left(\sum_{i=0}^{K^{k+3}-1}\left(\mu(Q_{j,i}) K^{js(n+\frac{1}{p})}\omega(r_j)^{-\frac{s}{q}}\right)^\frac{p}{p-s} \right)^{\frac{p-s}{p}}
    \asymp\left\|S_r \right\|_{L^{\frac{p}{p-s},\infty}_\omega},
    \end{split}
    \end{equation*}
completing the proof of the case $q\leq s<p$ for $r>0$ large enough.

The remaining case $s\ge\max\{p,q\}$ is the simplest one of all because now
    \begin{equation*}
    \begin{split}
    S_r(z)
    =\sum_j \sum_l \frac{\mu(Q_{j,l})}{K^{-js(n+\frac{1}{p})}\whw(r_j)^{\frac{s}{q}}}\chi_{Q_{j,l}}(z)
    \lesssim T_r(z)\lesssim \sum_j \sum_l \frac{\mu(\widehat{Q}^r_{j,l})}{K^{-js(n+\frac{1}{p})}\whw(r_j)^{\frac{s}{q}}}\chi_{Q_{j,l}}(z)=B_r(z),
    \end{split}
    \end{equation*}
and hence
    \begin{equation*}
    \begin{split}
    \left\|\left\{\frac{\mu(Q_{j,l})}{K^{-js(n+\frac{1}{p})}\whw(r_j)^{\frac{s}{q}}}\right\}_{j,l}\right\|_{\ell^\infty}
    &\lesssim \left\| T_r \right\|_{L^\infty}
    \lesssim \left\|\left\{\frac{\mu(\widehat{Q}^r_{j,l})}{K^{-js(n+\frac{1}{p})}\whw(r_j)^{\frac{s}{q}}}\right\}_{j,l}\right\|_{\ell^\infty}\\
    &\lesssim \left\|\left\{\frac{\mu(Q_{j,l})}{K^{-js(n+\frac{1}{p})}\whw(r_j)^{\frac{s}{q}}}\right\}_{j,l}\right\|_{\ell^\infty}.
    \end{split}
    \end{equation*}
This completes the proof of the theorem in the case in which $r>0$ is sufficiently large, say $r\ge r_0=r_0(K)$. If $r\in(0,r_0)$, then dividing $Q_{j,l}$ into $M^2$ rectangles of equal area as in the proof of Theorem~\ref{th:reverse}, and then slightly modifying the proof just presented, the assertion easily follows. The details of this deduction do not offer us anything new and are therefore omitted.
\end{Prf}

\end{document}